\newtheorem{theorem}{Theorem}[section]
\newtheorem{lemma}[theorem]{Lemma}
\newtheorem{proposition}[theorem]{Proposition}
\newtheorem{remark}[theorem]{Remark}
\newcommand{\BigO}[1]{\ensuremath{\operatorname{O}\left(#1\right)}}
\newcommand{\LitO}[1]{\ensuremath{\operatorname{o}\left(#1\right)}}
\newcommand{\BiO}[2]{\ensuremath{\operatorname{O}^{#1}_{#2}}}
\newcommand{\BiOv}[3]{\ensuremath{\operatorname{O}^{#1, #3}_{#2}}}
\def\R{\mathbb{R}}
\def\v{v}
\def\ft{\tilde{f}}
\def\R{\mathbb{R}}
\def\r{r}
\def\hc{f}
\def\opf1{\mathcal{F}}
\def\co{\mathcal{C}}
\def\qq{q}
\def\w{w}
\def\b{b}
\def\bv{c}
\def\Ot{\widetilde{\Omega}}
\def\om{\tilde{\omega}}
\def\ga{\tilde{g}}
\def\ha{\tilde{h}}
\newcommand\f[1]{\hc_{#1}}
\author{M. Aguareles,  I. Baldom\`a \& T. M-Seara}
\title{On the asymptotic wavenumber of spiral waves in $\lambda-\omega$ systems.}
\date{\today}
\begin{document}
\begin{abstract}
In this paper we consider spiral wave solutions of a general class of $\lambda-\omega$ systems with a small parameter $q$ and we prove that the asymptotic wavenumber of the spirals is a $\mathcal{C}^{\infty}$-flat function of 
the perturbation parameter $q$. 
\end{abstract}
\maketitle

\section{Introduction}
Rigidly rotating spiral waves are commonly found in many chemical systems and biological processes
\cite{kura84, lechleiter91, winfree72, zaikin70}.
In particular they are most likely to occur in oscillatory models having a rotational symmetry, such as generic $\lambda-\omega$ systems
\cite{kura84}, \cite{scheel98}. These can be derived as the normal form of oscillatory reaction-diffusion systems near a Hopf bifurcation and read:
\begin{eqnarray}
u_t &=& \Delta u + \lambda(f) u -\omega(f) w,\label{lo1}\\
w_t &=& \Delta w + \omega(f) u +\lambda(f) w,\label{lo2}
\end{eqnarray}
where  $u=u(x,y,t)$, $v=v(x,y,t)$ and $\Delta$ denotes the Laplacian.
$\lambda$ and $\omega$ are real functions of $f = \sqrt{u^2+w^2}$.
The conditions that $\lambda$ usually satisfies are: $\lambda(1)=0$, to ensure that the system has a space independent limit solution and
$\lambda'<0$, to guarantee that this limit cycle is stable to homogeneous (space independent) perturbations.
As for $\omega$, based on stability considerations (see \cite{kopell}), it is usually assumed that $|\omega'|$ is small.

Numerical computations reveal that the system \eqref{lo1}-\eqref{lo2} exhibits solutions in the shape of $n$-spirals (see for instance \cite{bohr97, garvie2005}) and more precisely, in the shape of Archimedian spiral waves with a specific frequency $\Omega$.
These rigidly rotating solutions of \eqref{lo1}-\eqref{lo2} can then be written like
\begin{equation}
\label{uv}
\begin{split}
&u(r,\phi,t)=f(r) \cos\left(\Omega t + n\phi - \int_0^r v(s)\,ds\right),\\
&w(r,\phi,t)=f(r) \sin\left(\Omega t + n\phi - \int_0^r v(s)\,ds\right),
\end{split}
\end{equation}
being $r$ and $\phi$ the polar radius and azymuthal coordinates of the plane and thus the Laplacian can be expressed as
$\Delta = \partial_{rr}+\partial_{r}/r+\partial_{\phi\phi}/r^2$.
Therefore, since $f(r)$ plays the role of a modulus, $f(r)\geq 0$ $\forall r>0$ and also $f(0)=0$ in order for $u$ and $w$ to be regular at $r=0$. Also, in order for these functions to have the shape of a spiral, the phase must increase or decrease monotonically  as one moves away from the centre of the spiral and so $v(r)$, which is usually denoted as the \emph{local wavenumber}, must have a constant sign for all $r$. In the particular case where $n=0$, the phase is purely radial and they are usually denoted as \emph{target patterns} since the lines of constant phase become concentric rings, that is to say, along any radial line, the pattern is asymptotically that of a plane wave. 

Substituting the particular expressions \eqref{uv} in \eqref{lo1}-\eqref{lo2} one obtains a set of ordinary differential equations in terms of the radial polar variable, $r$, that reads
\begin{eqnarray}
0&=& f''+\frac{f'}{r}-f\frac{n^2}{r^2}+f(\lambda(f)-v^2),\label{edo1}\\
0&=&fv'+\frac{fv}{r}+2f'v + f(\Omega-\omega(f)).
\label{edo2}
\end{eqnarray}
We note that any arbitrary constant can be added to the phase of the sine and cosine functions of $u$ and $w$ in \eqref{uv} and they would still yield the same equations \eqref{edo1}-\eqref{edo2}.

Using the identity
\begin{equation}
\label{propv}
f\v'+\dfrac{fv}{r}+2f'v=\dfrac{(f^2v r)'}{rf},
\end{equation}
along with the fact that $(f^2(r) v(r) r)_{|r=0}=0$, equation \eqref{edo2} can be expressed in the integral form,
\begin{equation}
v(r)=(rf^2(r))^{-1}\int_0^r t f^2(t)(\omega(f(t))-\Omega) \,dt,
\label{v}
\end{equation}
and this yields $v(0)=0$.

Archimedian spiral waves are characterized by the fact that the distance between two neighbouring fronts of the isophase lines tends to a constant, as $r\to\infty$.
That is to say, if we consider two points of an isophase line ($n\phi-\int_0^r v(s)\,ds=ctant$) one with coordinates $(\phi, r)$ and the following one on the same radial line with coordinates $(\phi + 2\pi, r+\delta(r))$, one obtains,
$$n\phi + \int_0^r v(s)\,ds = n(\phi + 2\pi) + \int_0^{r+\delta(r)} v(s)\,ds.$$
The separation between these two fronts is thus here represented by $\delta(r)$ and satisfies
$$\int_r^{r+\delta(r)} v(s)\,ds = 2\pi n.$$
Then, for Archimedian spiral waves it is expected that $\delta(r)\to D<\infty$ as $r\to\infty$. Using the mean value theorem in the last equality gives $v(r)\to v_{\infty}<\infty$ as $\r\to\infty$ with $v_{\infty}=2\pi n/D$, that is to say, $v_{\infty}$ is proportional to the inverse of the spirals' front separation $D$, and it is usually known as the \emph{asymptotic wavenumber}. As for the modulus, $f(r)$, the type of solutions that have been observed are such that $f(r)$ has a bounded limit and $f'(r)\to 0$ as $r\to\infty$. We will therefore focus on solutions of \eqref{edo1}-\eqref{edo2} such that $f(0)=0$, $f(r)$ and $v(r)$ have bounded limits and $f'(r)\to 0$ as $r\to\infty$.
These are four restrictions to a third order system of  differential equations which suggests that there exists a selection mechanism for the frequency $\Omega$, that is to say, $\Omega$ cannot be arbitrary. In fact, Kopell \& Howard in \cite{kopell} establish the existence of spiral wave solutions ($n\neq 0$) and target patterns ($n=0$) for a particular value of $\Omega$ when $\lambda(1)=0$, $\lambda'(x)<0$ and $\omega(x)<0$. These are then solutions of the system \eqref{edo1}-\eqref{edo2} for $n\in\mathbb{Z}$ with precisely the boundary conditions described above, that is,
\begin{equation}
\label{condi}
\begin{split}
&f(0)=v(0)=0,\\
&\lim_{r\to\infty}f(r)=f_{\infty} <\infty,\quad \lim_{r\to\infty} f'(r)=0,\quad \lim_{r\to\infty} v(r)=v_{\infty}<\infty \\
&\textrm{$f(r)> 0$ and $v(r)$ has constant sign $\forall r>0$.}
\end{split}
\end{equation}
Spiral wave solutions of systems of the type in \eqref{edo1}-\eqref{edo2} with some particular functions $\lambda(x)$, $\omega(x)$ have been studied by numerous researchers. For instance, Hagan in \cite{hagan82} considers the particular case of the complex Ginzburg-Landau equation where
$\lambda(x) = 1-x^2$ and $\omega(x) = -q x^2$.
He uses the method of matching formal asymptotic expansions to construct spiral wave solutions for small values of the parameter $q$.
In particular, he formally finds that  the asymptotic wavenumber $v_\infty = \lim _{r\to\infty} v(r)$ and $\Omega-q$ are exponentially small in $q$.
Also,  Greenberg in \cite{Greenberg} uses a formal perturbation technique to construct solutions of \eqref{edo3}-\eqref{edo4} when $\lambda(x) = 1-x$ and $\omega(x) = 1+q (x-1)$.
 Kopell and Howard in \cite{kopell}  establish the existence of spiral wave solutions of \eqref{edo1}-\eqref{edo2} under the hypothesis that
 $\lambda(1)=0$, $\lambda'(\cdot)<0$, $\omega'(\cdot)<0$ and $|\omega'(\cdot)|<<1$.

The ultimate motivation of the work in this paper is precisely to investigate the exponentially small character of the asymptotic wavenumber for a general class of $\lambda-\omega$ systems. 
In particular, in this paper we consider a general class of $\lambda-\omega$ systems with the following conditions:
\begin{itemize}
\item[(A1)] \label{A1}
$\lambda$ and $\omega$ belong to $\co^{\infty}(\R)$ and they are such that $\lambda(1) = 0$, $\lambda'(1) <0$, and $|\omega'|<<1$.
We remark that by suitably rescaling the radius variable $r$ and the phase function $v$ a new function $\bar{\lambda}$ may be written such that
$\bar{\lambda}(0)$ has any prescribed value.
Therefore, and without lost of generality we also assume that $\lambda(0)=1$.\\
\item[(A2)] \label{A2}
$x\lambda(x)$ is concave, that is to say, $\partial^2_x(x\lambda(x))<0$.
\end{itemize}

Since we assume that $|\omega'|<< 1$,  we shall write $\omega (x)= \omega_0+\qq\bar{\omega}(x)$, for $0\le\qq<<1$.
We introduce a new parameter $\bar \Omega$ such that the frequency may also be written like $\Omega=\omega_0+\qq\bar{\Omega}$.
Dropping the bars to simplify the notation
 equations \eqref{edo1}-\eqref{edo2} read
\begin{eqnarray}
0&=& f''+\frac{f'}{r}-f\frac{n^2}{r^2}+f(\lambda(f)-v^2),\label{edo3}\\
0&=&fv'+\frac{fv}{r}+2f'v + \qq f(\Omega-\omega(f)),
\label{edo4}
\end{eqnarray}
where $0< \qq <<1$ and $\Omega \in \R$ are the new parameters. As for the boundary conditions, we also consider the ones in \cite{kopell} given in \eqref{condi}.

In this paper we prove that, for these solutions to exist, a necessary condition is that
$\Omega=\Omega(q)$ has to be a $\mathcal{C}^\infty$ function of $q$ such that $\partial_q^k\Omega(0)=0$, for $k\geq 1$.
To prove this result  we
provide a formal expansion in $\qq$
of the solutions of equations \eqref{edo3}-\eqref{edo4} with boundary conditions \eqref{condi}.
We obtain an infinite set of differential equations with suitable boundary conditions, one for each order in $\qq$, and we rigorously prove that all these equations have a unique bounded solution if and only if $\Omega(q)-\Omega(0)$ is a $\mathcal{C}^\infty$-flat function of $\qq$.
Therefore, equations \eqref{edo3}-\eqref{edo4} with boundary conditions \eqref{condi} can be solved up to any order in $\qq$.
As a straightforward consequence of our results
the solutions obtained in \cite{kopell} satisfy that $\Omega-\Omega(0)$ and  $v_{\infty} =\lim_{r\to\infty} v(r)$ are $C^{\infty}$-flat functions of $\qq$.
This is then a beyond all orders phenomenon and the rigorous study of the asymptotic values of $\Omega$ and  $v_{\infty}$ will be the goal of a forthcoming paper.

The paper is organised as follows.
We start in Section \S\ref{mainT} by posing the particular shape of the formal solution of \eqref{edo3}-\eqref{edo4}.
We then introduce our main theorem \ref{main} of existence and uniqueness of this formal solution provided the power series for $|\Omega-\Omega(0)|$ has vanishing terms. We also provide a set of numerical computations for the classical complex Ginzburg-Landau problem where $\lambda(x)=1-x^2$ and $\omega(x)=-x^2$, which suggests that $\Omega-\Omega(0)$ is indeed not zero but exponentially small in $q$. In Section \S\ref{power} we prove the main result as follows: we start by obtaining the equations and boundary conditions that each term in the asymptotic expansion satisfies and we then proceed by induction.
\section{Main result: a formal solution}
\label{mainT}
In this section we introduce and justify the expected particular form of the asymptotic expansion in $q$ for the solution of system
\eqref{edo3}-\eqref{edo4} with boundary conditions \eqref{condi}.
We first start with a technical lemma that we shall prove in Section \ref{subsection:omega} below.

\begin{lemma}\label{lemma:omega}
The system \eqref{edo3}-\eqref{edo4} with boundary conditions \eqref{condi} has a solution $(f(r),v(r))$ if and only if
\begin{equation}\label{derfvtofv}
v_{\infty}^2-\lambda(f_{\infty})=0, \quad  \omega(f_{\infty}) -\Omega=0.
\end{equation}
In addition,
$
\lim_{r\to \infty} v'(r)=0.
$

As a consequence, in order to have solutions of \eqref{edo3}-\eqref{edo4}-\eqref{condi},
the parameter $\Omega$ has to be a suitable function of $q$, i.e: $\Omega=\Omega(q)$.
\end{lemma}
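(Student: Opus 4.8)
The plan is to prove the substantive (``only if'') direction by passing to the limit $r\to\infty$ in each of the two field equations, the key tool being the following elementary fact, which I would isolate at the outset: if $g\colon[0,\infty)\to\R$ is differentiable, $\lim_{r\to\infty}g(r)$ exists and is finite, and $\lim_{r\to\infty}g'(r)=L$ exists, then necessarily $L=0$ (otherwise $g$ would grow at least linearly and could not converge). Throughout I would use that $f_\infty>0$ for the solutions under consideration; this is consistent with the normalisation in (A1) and with $f_\infty$ being close to $1$, and it is what permits division by $f$ near infinity.

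First I would treat the amplitude equation \eqref{edo3}. Solving it for $f''$ gives $f''=-f'/r+fn^2/r^2-f(\lambda(f)-v^2)$. Each term on the right has a finite limit as $r\to\infty$: the first two vanish because $f\to f_\infty$ and $f'\to 0$, while the last tends to $f_\infty(\lambda(f_\infty)-v_\infty^2)$ by continuity of $\lambda$. Hence $f''$ has a finite limit, and applying the elementary fact to $g=f'$ (which converges to $0$) forces $\lim f''=0$, so $f_\infty(\lambda(f_\infty)-v_\infty^2)=0$; since $f_\infty>0$ this yields the first relation $v_\infty^2=\lambda(f_\infty)$.

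Next I would treat the phase equation \eqref{edo4}. Dividing by $f$ (legitimate for large $r$ since $f_\infty>0$) gives $v'=-v/r-2f'v/f-q(\Omega-\omega(f))$. Again every term on the right converges as $r\to\infty$, the first two to $0$ and the last to $-q(\Omega-\omega(f_\infty))$, so $v'$ possesses the finite limit $-q(\Omega-\omega(f_\infty))$. Because $v$ itself converges to $v_\infty$, the elementary fact applied to $g=v$ forces this limit to vanish, which simultaneously establishes the ``in addition'' claim $\lim_{r\to\infty}v'(r)=0$ and, since $q>0$, the second relation $\omega(f_\infty)=\Omega$. The same conclusion can be read off the integral representation \eqref{v}: if $\omega(f_\infty)\neq\Omega$ the numerator grows like $r^2$ while the denominator grows like $r$, forcing $v(r)\to\pm\infty$ and contradicting $v\to v_\infty$. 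The stated consequence $\Omega=\omega(f_\infty)=\Omega(q)$ then follows since $f_\infty$ depends on $q$.

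For the converse I would note that \eqref{derfvtofv} is precisely the compatibility condition at infinity produced above, hence a necessary requirement that is met by the solutions supplied by the existence theory of Kopell and Howard \cite{kopell}; thus under (A1)--(A2) a solution of \eqref{edo3}--\eqref{edo4}--\eqref{condi} exists exactly when the limiting data satisfy \eqref{derfvtofv}. I expect the main obstacle to lie not in the limit-taking, which is routine once the right-hand sides are seen to converge, but in justifying $f_\infty>0$ (without which the factor $f_\infty$ cannot be cancelled and both relations degenerate) and in handling the phase equation with care, namely deriving that $v'$ has a limit \emph{before} invoking the elementary fact, so as not to assume circularly that $v'\to 0$ when proving $\omega(f_\infty)=\Omega$.
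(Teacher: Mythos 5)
Your core limit-taking argument coincides with the paper's (which packages your ``elementary fact'' as a consequence of L'H\^opital's rule), but there are two genuine gaps, precisely at the two places your last paragraph defers. The first is $f_\infty>0$: the boundary conditions \eqref{condi} give $f(r)>0$ at finite $r$ only, and nothing a priori excludes $f_\infty=0$; in that case both of your limit identities degenerate to $0=0$ and neither relation in \eqref{derfvtofv} follows. This is not a side issue but a step the paper proves from the equation itself: assuming $f_\infty=0$, one has $v_\infty=O(q)$ (since $v\equiv 0$ when $q=0$, cf.\ Remark \ref{Remq0}), hence $\lambda(f(r))-v^2(r)>n^2/r^2$ for all large $r$ because $\lambda(0)=1$; then \eqref{edo3} yields $(rf'(r))'<0$, so $f(r)-f(r_0)<r_0f'(r_0)\log(r/r_0)$, and choosing $r_0$ large with $f'(r_0)<0$ (possible for a positive function tending to $0$) sends the right-hand side to $-\infty$, contradicting $f>0$. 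Some argument of this kind must appear; ``consistent with the normalisation in (A1)'' is an expectation, not a proof.

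The second gap is the concluding assertion $\Omega=\Omega(q)$. Your justification, ``since $f_\infty$ depends on $q$,'' overlooks that the solution, and hence $f_\infty$, depends on \emph{both} parameters: $f_\infty=f_\infty(q,\Omega)$. Thus $\omega(f_\infty)-\Omega=0$ is an implicit equation, $\chi(q,\Omega):=\Omega-\omega(f_\infty(q,\Omega))=0$, and one must show it can actually be solved for $\Omega$ as a function of $q$. The paper does this with the implicit function theorem, using that at $q=0$ the equation for $f$ is independent of $\Omega$ (because $v\equiv 0$), so $\partial_\Omega f_\infty(0,\Omega)=0$ and therefore $\partial_\Omega\chi(0,\Omega)=1\neq 0$, which yields a locally unique $\Omega(q)$. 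Without this step the stated ``consequence'' is only a restatement of the second relation in \eqref{derfvtofv}. (A minor remark: your appeal to Kopell--Howard for the ``if'' direction is no worse than the paper itself, whose proof likewise establishes only necessity together with the implicit-function step.)
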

This theorem thus states that $v_\infty$ and $f_\infty$ are both functions of only the parameter $q$. To avoid cumbersome notation, we shall in general omit the dependence of $q$ unless such omission leads to error.

Lemma \ref{lemma:omega} above implies that the solution of system \eqref{edo3}-\eqref{edo4} we are dealing with only depends on the
small parameter $q$. We will call it $(f(r;\qq),v(r,\qq))$.
By inspecting equations \eqref{edo3} and \eqref{edo4}, one observes that the modulus $f(r;\qq)$, as well as the unknown frequency
$\Omega(\qq)$, are even functions of $\qq$, that is $\f{}(r;\qq)=f(r;-\qq)$ and
$\Omega(\qq)=\Omega(-\qq)$, while $v$ is an odd function of $\qq$, and so
$v(r;\qq)=-v(r;-\qq)$.
We can thus restrict to positive values of $\qq$ without lost of generality. Moreover, using this even and odd character of the functions with respect to $\qq$ we shall formally find the solutions to \eqref{edo3}-\eqref{edo4} as power series in $\qq$ of the form:
\begin{equation}
\begin{split}
\label{exp}
&f(r;q) = \sum_{i\geq 0} \f{i}(r) \qq^{2i},\quad v(r;q) = \qq\sum_{i\geq 0} \v_{i}(r)\qq^{2i},\\
&\Omega(q)=\sum_{i\geq 0} \Omega_{i}\qq^{2i}.
\end{split}
\end{equation}
In what follows we will find the differential equations and the boundary conditions that  $\f{k}(r)$ and $\v_{k}(r)$
have to satisfy. In order to solve these differential equations, we
will find  the terms for the expansion of the frequency, $\Omega_k$.

Since we will deal with the behaviour as $r\to 0$ and $r \to +\infty$, we also introduce the notation
\begin{equation}\label{notation}
\psi =\BiO{l}{m} \Longleftrightarrow \psi(r) = \BigO{r^m},\; r \to 0 \ \mbox{and}  \ \psi(r) = \BigO{r^{-l}}\; r\to +\infty,
\end{equation}
and
\begin{equation}\label{notationv}
\psi=\BiOv{l}{m}{j} \Longleftrightarrow \psi(r) = \BigO{r^m},\; r \to 0 \ \mbox{and}  \  \psi(r) = \BigO{\log (r)^j r^{-l}}\; r\to +\infty,
\end{equation}
which will be used along this paper without special mention.

The main result in this paper is:
\begin{theorem}
\label{main}
Assume hypotheses (A1)-(A2) hold. Then the system \eqref{edo3}-\eqref{edo4}-\eqref{condi} has a unique formal solution of the form
\eqref{exp} with $\lim_{r\to +\infty}f_0(r)=1$ and satisfying that for all $k\geq 0$:
$$
\f{k}(0)=\v_{k}(0)=0 \ \mbox{and} \ \f{k}(r),  \v_{k}(r)  \ \mbox{are bounded as} \ r\to\infty
$$
if and only if
$$
\Omega_0=\omega(1), \ \mbox{ and } \ \Omega_k=0, \ \forall k\geq 1.
$$
The functions $\f{k}(r), \v_{k}(r)$ also satisfy that
$$
\lim_{r\to\infty} \v'_{0}(r)=0, \quad \lim_{r\to\infty} \v_{0}(r)=0,
$$
and
$$
\lim_{r\to\infty} \f{k}(r)=\lim_{r\to\infty} v_k(r)=0,\quad\textrm{for all $k>0$}.
$$

Moreover,
\begin{align*}
&\f{0}(r) = \BigO{r^{n}} \;\; \text{as}\;\;r\to 0, \;\;1-\f{0}(r)  = \BigO{r^{-2}}\;\;  \text{as}\;\;r\to +\infty. \\
&\f{0}'\in \BiO{3}{n-1},\;\; \f{0}''\in \BiO{4}{\max\{n-2,0\}}\\
&\v_0 \in \BiOv{1}{1}{1},\;\; \v_0'\in \BiOv{2}{0}{1},\;\; \v_0''\in \BiOv{3}{0}{1}
\end{align*}
and for $k\geq 1$,
\begin{align*}
&\f{k} \in \BiOv{2}{n}{2k},\;\; \f{k}' \in \BiOv{3}{n-1}{2k},\;\;\f{k}''\in \BiO{3}{\max\{n-2,0\}}\\
&\v_{k} \in \BiOv{1}{1}{2k+1},\;\; v_{k}'\in \BiOv{2}{0}{2k+1},\;\; v_{k}''\in \BiOv{3}{0}{2k+1}.
\end{align*}

Finally if $\omega$ is a monotone function, $v_0$ has constant sign.
\end{theorem}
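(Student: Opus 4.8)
The plan is to substitute the ansatz \eqref{exp} into \eqref{edo3}--\eqref{edo4}, collect equal powers of $q$, and solve the resulting hierarchy of linear ordinary differential equations for $f_k(r)$ and $v_k(r)$ by induction on $k$, determining the constants $\Omega_k$ as we go. The observation organising the induction is that \eqref{edo3} contains no $\Omega$ and is even in $q$, so its coefficient of $q^{2k}$ gives a linear equation for $f_k$ whose right-hand side $S_k$ depends only on $f_0,\dots,f_{k-1}$ and $v_0,\dots,v_{k-1}$, whereas the coefficient of $q^{2k+1}$ in \eqref{edo4} gives an equation for $v_k$ involving $f_0,\dots,f_k$, $v_0,\dots,v_{k-1}$ and the still unknown $\Omega_k$. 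Thus at each stage I would first solve for $f_k$ and only then for $v_k$, the requirement that $v_k$ be bounded pinning down $\Omega_k$.

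For the base case, at order $q^0$ equation \eqref{edo3} reduces to $f_0''+f_0'/r-n^2f_0/r^2+f_0\lambda(f_0)=0$ with $f_0(0)=0$ and $f_0\to1$, whose unique positive strictly increasing solution, together with the bounds $f_0=\BigO{r^n}$ and $1-f_0=\BigO{r^{-2}}$, is supplied by \cite{Aguareles2011}. At order $q$, using the identity \eqref{propv} with $f_0$ and integrating as in \eqref{v} gives
\begin{equation*}
v_0(r)=\frac{1}{rf_0^2(r)}\int_0^r t\,f_0^2(t)\bigl(\omega(f_0(t))-\Omega_0\bigr)\,dt,
\end{equation*}
whose integrand is asymptotic to $t\,(\omega(1)-\Omega_0)$; hence $v_0$ is bounded if and only if $\Omega_0=\omega(1)$, and with this choice $\omega(f_0)-\omega(1)=\BigO{r^{-2}}$ yields $v_0\in\BiOv{1}{1}{1}$ with $v_0,v_0'\to0$. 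The same mechanism recurs at every order: writing the order-$q^{2k+1}$ part of \eqref{edo4} through \eqref{propv} as $(f_0^2v_kr)'=-\Omega_k\,rf_0^2-rf_0\widetilde S_k$ and integrating from $0$ gives
\begin{equation*}
v_k(r)=\frac{1}{rf_0^2(r)}\Bigl[-\Omega_k\int_0^r t f_0^2\,dt-\int_0^r t f_0\,\widetilde S_k\,dt\Bigr].
\end{equation*}
Under the inductive hypothesis $\Omega_1=\dots=\Omega_{k-1}=0$ together with the decay estimates for the lower-order terms, every contribution to $\widetilde S_k$ tends to zero (its slowest part decaying like $r^{-2}\log^{2k}r$), so the source integral grows at most like $\log^{2k+1}r$, whereas $\int_0^r tf_0^2\,dt\sim r^2/2$. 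Therefore boundedness of $v_k$ forces $\Omega_k=0$, and conversely $\Omega_k\neq0$ produces forbidden $r^2$ growth; this establishes both implications, and with $\Omega_k=0$ one reads off $v_k\in\BiOv{1}{1}{2k+1}$ with $v_k\to0$.

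The hard part is the solvability, with sharp asymptotics, of the equation for $f_k$, namely
\begin{equation*}
\mathcal L f_k:=f_k''+\frac{f_k'}{r}-\frac{n^2}{r^2}f_k+\bigl(\lambda(f_0)+f_0\lambda'(f_0)\bigr)f_k=S_k(r),
\end{equation*}
with $f_k(0)=0$ and $f_k$ bounded, where $\mathcal L$ is the linearisation of the $f_0$--equation. I would show $\mathcal L$ is invertible on the space cut out by these boundary conditions and that the unique solution inherits $\BiOv{2}{n}{2k}$ bounds from a rapidly decaying $S_k$. The potential $Q(r)=\lambda(f_0)+f_0\lambda'(f_0)=\partial_f(f\lambda(f))|_{f_0}$ satisfies $Q(0)=\lambda(0)=1>0$ and $Q(\infty)=\lambda'(1)<0$, and by (A2) $\partial_f^2(f\lambda)=2\lambda'+f\lambda''<0$, so along the increasing profile $f_0$ the function $Q$ is strictly decreasing and changes sign exactly once. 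I would exploit this sign structure, as in \cite{Aguareles2011}, via a maximum-principle argument to rule out a nontrivial homogeneous solution that is simultaneously regular at $0$ and bounded at infinity; since $Q(\infty)<0$ the homogeneous solutions behave like $e^{\pm\sqrt{|\lambda'(1)|}\,r}$, so variation of parameters against the regular-at-$0$ and the decaying-at-infinity solution produces a unique bounded $f_k$, and tracking the logarithmic powers through the resulting integrals yields the stated asymptotics. This invertibility-with-estimates step, propagated through the induction, is the main obstacle; once it is in place the determination $\Omega_k=0$ is a corollary of the dominant-balance computation above.

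Finally, for the sign of $v_0$ when $\omega$ is monotone, note that with $\Omega_0=\omega(1)$ the integrand in the formula for $v_0$ equals $t f_0^2(t)\bigl(\omega(f_0(t))-\omega(1)\bigr)$. Since $f_0$ increases strictly from $0$ to $1$ we have $0<f_0(t)<1$ for every $t>0$, so monotonicity of $\omega$ makes $\omega(f_0(t))-\omega(1)$ of one fixed sign throughout $(0,\infty)$; hence the integral, and therefore $v_0$, has constant sign.
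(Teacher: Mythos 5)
Your proposal is correct in its architecture and, for most of the theorem, runs exactly parallel to the paper: the hierarchy of equations, the recovery of $v_k$ from the integral formula coming from \eqref{propv}, the dominant-balance argument that boundedness of $v_k$ forces $\Omega_0=\omega(1)$ and $\Omega_k=0$ (since $\int_0^r t f_0^2\,dt\sim r^2/2$ swamps the logarithmically growing source integral), and the constant-sign argument for $v_0$ are all the same as in the paper. Where you genuinely diverge is the key technical step, the solvability-with-estimates of the linear equation $\mathcal{L}f_k=b_k$. The paper (Lemma \ref{soleqfkgeneral}) adds and subtracts $d\,g$ with $d=-\lambda'(1)$, uses the modified Bessel functions $I_n,K_n$ of the constant-coefficient comparison operator as an explicit kernel, and proves contraction of the resulting fixed-point operator in the weighted space with weight $w=f_0'$, the contraction constant being controlled through the identity $T=w-\mathcal{T}[h_0]$ together with $0<rf_0'\le n^2f_0$ and the monotonicity of $DF$. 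You instead build a Green's function from the exact homogeneous solutions of $\mathcal{L}$ (regular at $0$, decaying at infinity), with invertibility resting on triviality of the kernel. Your route buys a cleaner uniqueness statement (exponential dichotomy at infinity plus trivial kernel immediately identifies the bounded solution, whereas the paper needs a separate bootstrap with Lemma \ref{lemma:Top}); the paper's route buys fully explicit kernel asymptotics and never has to analyse the non-explicit homogeneous solutions of the variable-coefficient operator. Two caveats if you carry your plan out. First, the sign structure of $Q=DF(f_0)$ (decreasing, one sign change) is not by itself a usable maximum principle, because $Q>0$ near the origin; the argument that actually closes is the Wronskian comparison against $f_0$ itself: if $\mathcal{L}\psi=0$ then
\begin{equation*}
\frac{d}{dr}\Bigl[r\bigl(\psi'f_0-\psi f_0'\bigr)\Bigr]=r\,\psi\,\bigl(F(f_0)-f_0\,DF(f_0)\bigr),
\end{equation*}
and (A2) together with $F(0)=0$ gives $F(x)-x\,DF(x)=-x^2\lambda'(x)>0$ for $x>0$, which rules out a solution both regular at $0$ and decaying at infinity; this identity is where (A2) really enters, and it is the same structural fact the paper exploits in its contraction estimate. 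Second, $b_k$ is not ``rapidly decaying'': it is only $\BiOv{2}{n+1}{2k}$, and the induction needs the sharper conclusion that $f_k+d^{-1}b_k$ decays faster than $b_k$ itself (this is the paper's $\Delta g\in\BiO{3}{n}$, which is what yields $f_k\in\BiOv{2}{n}{2k}$); your variation-of-parameters integrals do produce this, but it must be extracted explicitly rather than inferred from fast decay of the source.
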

\begin{remark}
\label{Remq0}
Note that when $\qq=0$, equation \eqref{edo4} becomes
$$
0=fv'+\frac{fv}{r}+2f'v=\frac{(f^2 v r)'}{f r},
$$
so $f^2(r;0) v(r;0) r \equiv c$, which, upon evaluating at $r=0$ gives $c=0$.
Since we are interested in non trivial solutions for $f(r;0)$, we obtain that $v(r;0)\equiv 0$. As a consequence,
$v_{\infty}=0$. Henceforth, equations \eqref{derfvtofv} imply that $\lambda(f_{\infty})=0$ and $\Omega=\omega(f_{\infty})$. Since by assumption
(A1) $\lambda(1)=0$ it seems natural to choose $f_{\infty}=1$. In fact, if $\lambda'(x)<0$ this is the only choice.

Moreover, since $\lambda'(1)<0$, when $q\neq 0$, for any $v_{\infty}=O(q)$, the equation
$\lambda(f_{\infty})=v_{\infty}^2$ has a solution $|f_{\infty}-1|=O(q)$, if $|q|$ is small enough.

On the other hand, if $\lambda(x)$ has another zero $x_0<1$ satisfying
$\lambda'(x_0)<0$, Theorem \ref{main} can also be applied in this case by rescaling $f=\overline{f}x_0$.
In conclusion, the condition $\lim_{r\to +\infty}f_0(r)=1$ is not restrictive.
\end{remark}

From this theorem we conclude that, if system \eqref{edo3}-\eqref{edo4}-\eqref{condi} has solution $(f(r;q),v(r;q))$
either $v_{\infty}=\lim _{r\to \infty}v(r)=0$
or $v_{\infty}=\lim _{r\to \infty}v(r)=O(\qq ^k)$, $\forall k\ge 0$. That is, the solution $v(r;q)$ either vanishes as $r\to\infty$ or it is $C^{\infty}$-flat  in $\qq$.
In fact, numerical computations reveal that $v(r;q)$ is indeed not zero at infinity. 

\subsection{Numeric computations}
As an example we have considered a Ginzburg-Landau system for $n=1$, which corresponds to $\lambda(x)=1-x^2$ and $\omega(x)=-x^2$, that is:
\begin{align}
0&= f''+\frac{f'}{r}-f\frac{1}{r^2}+f(1-f^2-v^2),\label{CGLn1}\\
0&=fv'+\frac{fv}{r}+2f'v + qf(\Omega-f^2).\label{CGLn2}
\end{align}
with boundary conditions \eqref{condi}. We have used a MATLAB routine to obtain $v_\infty$ which uses a finite difference scheme implementing the three-stage Lobatto Illa formula which provides a $C^1$-continuous solution that is fourth-order accurate uniformly in the interval of integration. 

In \cite{hagan82} and \cite{AgChW10} it is formally obtained an expression for $v_\infty$ as a function of $q$, which is found to be of the form $v_\infty \sim Ae^{-B/\qq}\qq^{-1}$. Performing a linear fit of $\log(\qq v_\infty)$ with 95\% confidence we obtain $B=1.588191499224517$, using moderate values of $\qq\in[0.2, 0.5]$, so it seems to agree with the predicted value $B=\pi/2$. A rigorous computation of $A$ and $B$ would require working with multiprecision and it is beyond the scope of this paper. 

\begin{figure}
\begin{center}
\includegraphics[scale=0.4]{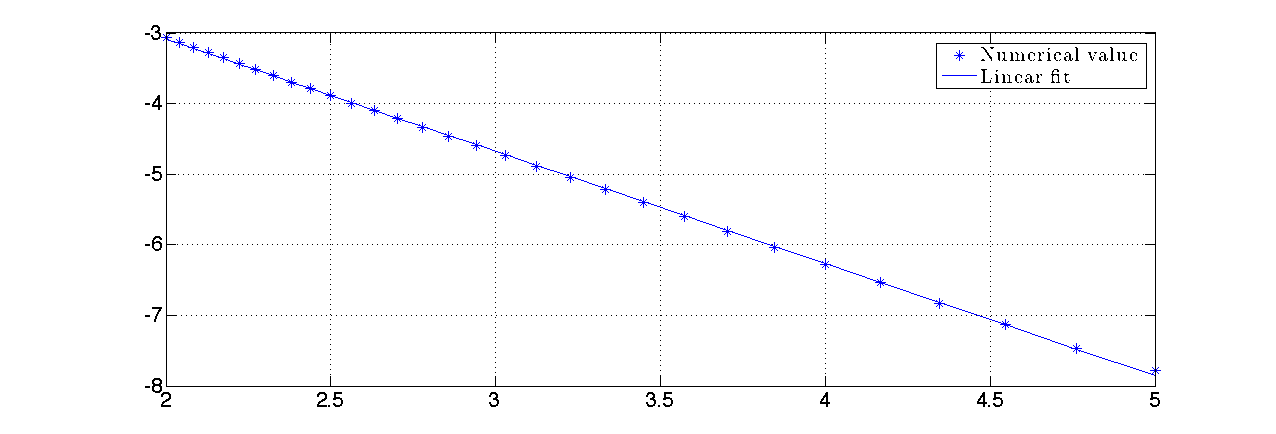}
\end{center}
\caption{Value of $\log(\qq v_{\infty})$ as a function of $1/\qq$, with $\qq\in[0.2,0.5]$ for the system \eqref{CGLn1}-\eqref{CGLn2}.}
\end{figure}



\subsection{Dependence of $\Omega$ on the parameter $q$}\label{subsection:omega}
We now prove Lemma \ref{lemma:omega}.
Assume that system $\eqref{edo3}-\eqref{edo4}-\eqref{condi}$ has a solution $(f(r;\qq,\Omega),v(r;\qq,\Omega))$. We will omit the dependence on
the parameters if there is no danger of confusion.
To prove Lemma \ref{lemma:omega} we first recall that when $\qq=0$, $v\equiv 0$, as it is pointed out in the Remark \ref{Remq0}. As a consequence, $v_{\infty}=O(q)$.

Now we check that $f_{\infty}\neq 0$. Assume that $f_{\infty}=0$. In this case, since $v_{\infty}=O(q)$, we have that
$\v_{\infty}<<1=\lambda(0) = \lambda(f_{\infty})$ taking $q$ small enough. It follows that,
for $r_0$ large enough
$ n^2/r^2 < \lambda(f(r))-v^2(r)$, for all $r\geq r_0$ and using that $f$ is a positive solution of \eqref{edo3}:
$$
(r \f{}'(r) )' = r \f{}''(r) + \f{}'(r)<0 \Longrightarrow
\f{}'(r) < \frac{r_{0}}{r}\f{}'(r_0) \Longrightarrow
\f{}(r) - \f{}(r_0) < r_0 \f{}'(r_0) \log (r_0^{-1} r).
$$
Since $\f{\infty}=0$ and $f(r)>0$, one can take $r_0$ large enough such that
$f'(r)<0$ for $r\geq r_0$. Taking $r\to \infty$ in  the above inequality, we find a contradiction with the fact that $f$ is a bounded function.

According to equation \eqref{edo3}, using that $f'(r)\to 0$ as $r\to +\infty$, we obtain that
$\lim_{r\to\infty} f''(r) = -f_{\infty}(\lambda(f_{\infty})-v_{\infty}^2),$
while equation \eqref{edo4} provides that $\lim_{r\to\infty} v'(r)f_\infty= -\qq f_{\infty} (\Omega -\omega(f_\infty))$.
Then, taking into account
that $f_{\infty} \neq 0$ and
the simple fact that:
\begin{equation}\label{simplefact}
\text{if } h(r)  \text{  is bounded for all $r>0$ and  }\lim_{r\to +\infty} h'(r)=b, \text{  then  }b=0,
\end{equation}
which is immediate by H\^opital's rule, it is found that $\lim_{r\to +\infty}f''(r)= 0$  and  $\lim_{r\to +\infty}v'(r)= 0$.
Therefore, one is left with the couple of equations for the boundary values at infinity:
$$
\lambda(f_{\infty})-v_{\infty}^2=0,\quad \Omega -\omega(f_\infty)=0.
$$

Finally, if we explicitly write the dependence on $\qq,\Omega$ and we have that, to have solutions
of our problem:
$$
\chi(\qq,\Omega):=\Omega-\omega(f_{\infty}(q,\Omega))=0.
$$
Note that, when $\qq=0$, for any value of $\Omega$, $v(r;0,\Omega)=0$ and $f$ has to satisfy the equation
$$
f'' + \frac{f'}{r} -f \frac{n^2}{r^2} + f\lambda(f)=0
$$
which is independent of $\Omega$ so that $f_{\infty}(0,\Omega)=f_{\infty}^0$ does not depend on $\Omega$ and thus $\partial_{\Omega}f_{\infty}(0,\Omega)=0$. Therefore, using this and differentiating $\chi(\qq,\Omega)$ with respect to $\Omega$ one is left with  $\partial_{\Omega}\chi(0,\Omega(f_{\infty}^0))=1\neq 0$, which along with the fact that $\chi(0,\Omega(f_{\infty}^0))=0$, the implicit function theorem defines
a function $\Omega(q)$ such that $\chi(\qq,\Omega(q))=0$ if $|\qq|$ is small enough.

\section{Power series expansions: proof of Theorem \ref{main}}
\label{power}
The idea of the proof is as follows: we first start by describing the system of equations for $\f{k},\v_{k}$, $k\geq 0$ introduced in \eqref{exp}. We then deduce the boundary conditions that $\f{k},\v_{k}$ must satisfy in order to be bounded solutions of such equations. 
We then prove the existence of $\f{k},\v_{k}$, along with some useful properties of the leading order terms $\f{0},\v_0$.

We emphasize that this is a formal procedure, so we do not pay special attention to some constants which, of course,
could grow with respect to $k$ at any formal step. 
For this reason we sometimes avoid the exact computation of some of these constants and we indeed may use the same letter  to denote different ones. 

\subsection{Differential equations for $\f{k},\v_{k}$}\label{FormalEquations}
We are going to describe the equations that $\{\f{k},\v_k\}_{k\geq 0}$ have to satisfy.
As it is usual, the equations for the leading order terms $\f{0}$ and $\v_{0}$ will be nonlinear while the equations
for $\f{k},\v_k$ will be found to be non-homogeneous linear equations.
To shorten the notation we introduce $F(x) = x \lambda(x)$ and $\om(x) = x \omega(x)$ and we denote 
$DF(x)$ and $D\tilde \omega (x)$ to denote the derivatives respect to $x$ of these functions. 

With this notation,  equations \eqref{edo3}, \eqref{edo4} read:
\begin{eqnarray}
0&=& f''+\frac{f'}{r}-f\frac{n^2}{r^2}+F(f)-fv^2,\label{edo33}\\
0&=&fv'+\frac{fv}{r}+2f'v + \qq (f\Omega- \om(f)), \label{edo44}
\end{eqnarray}
and we consider the formal expansions defined in \eqref{exp}:
$$f(r;q) = \sum_{i\geq 0} \f{i}(r) \qq^{2i},\quad v(r;q) = \qq\sum_{i\geq 0} \v_{i}(r)\qq^{2i},\quad
\Omega(q)=\sum_{i\geq 0} \Omega_{i}\qq^{2i}.$$

\begin{proposition}\label{eqfkvk}
The leading order terms $\f{0}$ and $\v_0$, satisfy the equations
\begin{align}
\label{f0}
&0=\f{0}''+\frac{\f{0}'}{r}-n^2\frac{\f{0}}{r^2}+F(\f{0}),\\
\label{v0}
&0= \f{0}\v_0'+\frac{\f{0} \v_0}{r}+2\f{0}'\v_0 +  \f{0}\Omega_0-\tilde \omega(\f{0}).
\end{align}

For $k\geq 1$, $\f{k}$ and $\v_{k}$ satisfy the linear nonhomogeneous equations:
\begin{align}
\label{eq:fk}
&\f{k}''+\frac{\f{k}'}{r}-n^2\frac{\f{k}}{r^2}+DF(\f{0})\f{k}=\b_k(r),\\
\label{eq:vk}
&\f{0}\v_k'+\frac{\f{0} \v_k}{r}+2\f{0}'\v_k +  \f{0}\Omega_k = \bv_k(r),
\end{align}
where
\begin{align}
b_{k}(r)= &-\sum_{i=1}^k D^{2i} F(\f{0}(r)) \sum_{\scriptsize{\begin{array}{c} k_1+\cdots+k_i=k \\ 1\leq k_j\leq k-1\end{array}}} f_{k_1}(r) \cdots f_{k_i}(r) \notag \\
&+\sum_{i=1}^{k} \sum_{l=1}^{i}  f_{k-i}(r) \cdot v_{l-1}(r) \cdot v_{i-l}(r), \label{defbk}
\\ \label{defbvk}
\bv_k(r)=& \sum_{i=0}^{k-1} \left (\f{k-i}(r)\big (\v_i'(r)+ r^{-1}\v_i(r)\big ) + 
2\f{k-i}'(r)\v_i(r) \right ) + \sum_{i=0}^{k-1}  \f{k-i}(r) \cdot \Omega_i \notag
\\
&-\sum_{i=1}^{2k} D^{2i} \om(f_0(r)) \sum_{\scriptsize{\begin{array}{c} k_1+\cdots+k_i=k \\ 1\leq k_j\end{array}}} 
\f{k_1}(r) \cdots \f{k_i}(r)
\end{align}
with
$F(x)=x\lambda (x)$ and $\om(x) =x\omega(x)$.
In particular, $b_k$ is independent of $\f{k}$ and $\v_k$ and $\bv_{k}$ is independent of $\v_k$.
\end{proposition}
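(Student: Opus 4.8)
The plan is to derive both the leading-order equations \eqref{f0}--\eqref{v0} and the forcing terms \eqref{defbk}--\eqref{defbvk} purely by substituting the ansatz \eqref{exp} into \eqref{edo33}--\eqref{edo44} and matching powers of $\qq$. The first thing I would pin down is the parity bookkeeping that makes such matching consistent. Since $f(r;\qq)$ and $\Omega(\qq)$ are even in $\qq$ and $v(r;\qq)$ is odd, every monomial produced by the terms of \eqref{edo33} ($f''$, $f'/r$, $n^2f/r^2$, $F(f)$, and $fv^2$ with $v^2$ even) carries an even power of $\qq$, while every monomial produced by \eqref{edo44} ($fv'$, $fv/r$, $2f'v$, and $\qq(f\Omega-\om(f))$) carries an odd power. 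Hence the only nontrivial relations are the coefficient of $\qq^{2k}$ in \eqref{edo33} and of $\qq^{2k+1}$ in \eqref{edo44}, and extracting them is self-consistent with the form of the ansatz.

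Next I would expand each term as a power series in $\qq$. The linear differential part of \eqref{edo33} contributes $\f{k}''+\f{k}'/r-n^2\f{k}/r^2$ to the $\qq^{2k}$ coefficient. The composite nonlinearities $F(f)$ and $\om(f)$ I would expand by Taylor's theorem (Fa\`a di Bruno) about $\f{0}$, writing $f=\f{0}+\sum_{i\ge 1}\f{i}\qq^{2i}$: the coefficient of $\qq^{2k}$ in $F(f)$ is the linear term $DF(\f{0})\f{k}$ plus a combination of products $\f{k_1}\cdots\f{k_i}$ ranging over compositions $k_1+\cdots+k_i=k$ into $i\ge 2$ parts with each $k_j\ge 1$, the coefficients being built from higher derivatives of $F$ at $\f{0}$, and similarly for $\om(f)$. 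The remaining products $fv^2$, $fv'$, $fv/r$, $f'v$ and $f\Omega$ I would expand by Cauchy products of the corresponding series.

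For $k=0$ these expansions give \eqref{f0} and \eqref{v0} directly, since $v^2=\BigO{\qq^2}$ kills the $fv^2$ term and the $\qq^{1}$ coefficient of \eqref{edo44} is exactly $\f{0}\v_0'+\f{0}\v_0/r+2\f{0}'\v_0+\f{0}\Omega_0-\om(\f{0})$. For $k\ge 1$ the key is to separate the top-order contributions. In \eqref{edo33} the coefficient $\f{k}$ enters only linearly, through the differential part and through the single term $DF(\f{0})\f{k}$ (the degree-one Fa\`a di Bruno term); it cannot enter $fv^2$ because $v^2=\BigO{\qq^2}$, and $\v_k$ cannot enter either. Moving these $\f{k}$-terms to the left produces the operator in \eqref{eq:fk}, and everything else—the degree-$\ge 2$ products from $F$ together with the full $\qq^{2k}$ coefficient of $fv^2$—depends only on $\f{0},\dots,\f{k-1},\v_0,\dots,\v_{k-1}$; reindexing these sums yields \eqref{defbk}, establishing that $\b_k$ is independent of $\f{k}$ and $\v_k$. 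Likewise, in \eqref{edo44} the coefficient $\v_k$ appears only paired with $\f{0}$ (as $\f{0}\v_k'$, $\f{0}\v_k/r$, $2\f{0}'\v_k$, because pairing $\v_k$ with any $\f{i}$, $i\ge 1$, overshoots order $2k+1$), and $\Omega_k$ appears only as $\f{0}\Omega_k$; moving these to the left gives the operator in \eqref{eq:vk}, and the remaining terms—which may legitimately contain $\f{k}$—yield \eqref{defbvk} and show $\bv_k$ is independent of $\v_k$.

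The conceptual content here is light: this is a formal power-series matching, so the real work is combinatorial rather than analytic. The step most prone to error, and the one I would treat most carefully, is the exact translation of the Fa\`a di Bruno and Cauchy-product sums into the displayed index sets of \eqref{defbk}--\eqref{defbvk}: for instance verifying that $\sum_{l=1}^{i}\v_{l-1}\v_{i-l}=\sum_{j+j'=i-1}\v_j\v_{j'}$ reproduces the $\qq^{2k}$ coefficient of $v^2$, and that the constraint $1\le k_j\le k-1$ is exactly what removes the top-order part that has been absorbed into the left-hand operator. As the authors note, the precise numerical constants (factorials and multinomial factors hidden in the higher-derivative coefficients) are immaterial for the induction that follows; what must be checked rigorously is only the structural statement that $\b_k$ involves no coefficient of index $\ge k$ and that $\bv_k$ involves no $\v_j$ of index $\ge k$, since this is precisely what permits \eqref{eq:fk} and then \eqref{eq:vk} to be solved in sequence.
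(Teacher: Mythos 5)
Your proposal is correct and is essentially the paper's own argument: both proofs substitute the ansatz \eqref{exp} into \eqref{edo33}--\eqref{edo44}, use the even/odd parity in $\qq$ to reduce to the coefficients of $\qq^{2k}$ and $\qq^{2k+1}$, handle the nonlinearities $F(f)$ and $\om(f)$ by Fa\`a di Bruno and the products by Leibniz/Cauchy products, and then isolate the terms linear in $\f{k}$ (resp.\ the terms where $\v_k$ pairs with $\f{0}$ and $\Omega_k$ with $\f{0}$) on the left, noting that the remainder involves only lower-index coefficients (plus, legitimately, $\f{k}$ inside $\bv_k$). The only cosmetic difference is bookkeeping: the paper rewrites the coefficients as Taylor derivatives, $\f{k}(r)=\partial_q^{2k}f(r;0)/(2k)!$, and differentiates the equations in $\qq$ at $\qq=0$, whereas you match series coefficients directly by expanding $F$ and $\om$ about $\f{0}$; the combinatorial content and the structural conclusions are identical.
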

\begin{proof}
By substituting expression \eqref{exp} in \eqref{edo33}, one obtains equation \eqref{f0} for $\f{0}$. 
As for $\v$, equation \eqref{edo44}, gives to leading order equation \eqref{v0} for $\v_{0}$.

We now deal with $\f{k},\v_{k}$, $k\geq 1$. 
To illustrate the procedure we start by obtaining the particular equations for $\f{1},\v_1$. 
Expanding equation \eqref{edo33} in powers of $\qq$, the order \BigO{q^{2}} provides an equation for $\f{1}$ in terms of $\v_0$ and $\f{0}$, which reads,
\begin{equation}
\label{eq:f1}
f_1''+\frac{f_1'}{r}-n^2\frac{f_1}{r^2}+DF(f_0)f_1=f_0 v_0^2,
\end{equation}
which gives $b_1(r)=f_0(r)v_0(r)^2$.

Expanding equation \eqref{edo44} in powers of $\qq$, the order \BigO{q^{3}} provides an equation for $\v_1$ in terms of $f_0$, $f_1$ and $v_0$:
\begin{equation}\label{eq:v1}
\f{0}\v_1'+\frac{\f{0} \v_1}{r}+2\f{0}'\v_1 +  \f{0}\big (\Omega_1-\f{1} D\omega(\f{0})\big ) = \bv_1(r)
\end{equation}
with 
\begin{equation}
\label{eq:c1}
\bv_{1}(r) = -\f{1}(r)(\Omega_0-\omega(\f{0}(r))) -\f{1}(r)\v_0'(r)- r^{-1} \f{1}(r) \v_0(r)-2\v_0(r) \f{1}'(r).
\end{equation}

To deal with the general case we first observe that the ansatz \eqref{exp} may also be expressed in terms of a Taylor expansion of $f,v$ and $\Omega$ with respect to $q$. 
Using the expansions \eqref{exp}, we deduce that $\partial_q^{2i+1} f(r;0)=0$, $\partial_{q}^{2i} v(r;0)=0$ and $\partial_{q}\Omega^{2i+1}(0)=0$. 
This yields:
\begin{equation}\label{expTaylor}
\begin{split}
f(r;q) &= \sum_{i\geq 0} \frac{\partial_{q}^{2i}f(r;0)}{(2i)!} \qq^{2i},\quad v(r;q) = \qq\sum_{i\geq 0} \frac{\partial_{q}^{2i+1} v(r;0)}{(2i+1)!} \qq^{2i},\\
\Omega(q)&=\sum_{i\geq 0} \frac{\partial_{q}^{2i} \Omega(0)}{(2i)!}\qq^{2i}.
\end{split}
\end{equation}
Therefore, 
\begin{equation}\label{fkdfk}
f_{k}(r) = \frac{\partial_{q}^{2k}f(r;0)}{(2k)!},\quad v_{k}(r) = \frac{\partial_{q}^{2k+1} v(r;0)}{(2k+1)!}\quad\text{and}\quad \Omega_k=\frac{\partial_{q}^{2k} \Omega(0)}{(2k)!}.
\end{equation}

As a consequence, in order to obtain the equations for $\f{k},\v_k$ and a general expression for $b_k,\bv_k$  
it is enough to differentiate equations \eqref{edo33} and \eqref{edo44} with respect to $q$.
We shall use Faa di Bruno formula along with Leibnitz's rule. 
%
%

We first deal with \eqref{edo33}. 
We must compute the $2k$- derivative with respect to $q$ of the nonlinear term $F(\f{})- \f{} \v^2$
and then evaluate at $q=0$. Using Faa di Bruno's formula and the identity \eqref{fkdfk} gives 
\begin{align*}
\frac{\partial_{q}^{2k} (F\circ f)(r;0)}{(2k)!} = &\sum_{i=1}^{2k} D^i F(f(r;0)) \sum_{\scriptsize{\begin{array}{c} k_1+\cdots+k_i=2k \\ 1\leq k_j\end{array}}} 
\frac{\partial_{q}^{k_1} f(r;0)}{k_1!} \cdots \frac{\partial_{q}^{k_i} f(r;0) }{k_i!}
 \\
= &DF(\f{0}(r))\f{k}(r) + b_k^1 (r),
\end{align*}
where, upon using once more identity \eqref{fkdfk} along with  $\partial_{q}^{2l+1} f(r;0)=0$, $b_k^1$ is found to read:
%
%
\begin{equation}\label{defbk1}
b_k^1(r) := \sum_{i=1}^k D^{2i} F(\f{0}(r))) \sum_{\scriptsize{\begin{array}{c} k_1+\cdots+k_i=k \\ 1\leq k_j\leq k-1\end{array}}} \f{k_1}(r) \cdots \f{k_i}(r).
\end{equation}
We note that the last sum does only depend on $\f{l}$ with $0< l<k$.

We now proceed likewise with $\partial_{q}^{2k} (f\cdot v^2)(r;0)$. Here we also note that $\partial_{q}^{2l}v(r;0)=0$. Then, using Leibnitz rule:
\begin{equation}\label{defbk2}
\begin{split}
b_{k}^2(r) :&= \partial_{q}^{2k} (f\cdot v^2)(r;0) = 
\sum_{i=2}^{2k} \left ( \begin{array}{c} 2k \\ i\end{array}\right ) \partial_q^{2k-i}f(r;0) \cdot \partial_{q}^{i} v^2(r;0) 
\\ &=\sum_{j=1}^{k} \sum_{m=1}^{2j-1} 
\left ( \begin{array}{c} 2k \\ 2j\end{array}\right )\left ( \begin{array}{c} 2j \\ m\end{array}\right )
\partial_{q}^{2k-2j} f(r;0) \cdot \partial_{q}^{m} v(r;0) \cdot \partial_{q}^{2j-m} v(r;0)
\\ &=(2k)!\sum_{j=1}^{k} \sum_{l=1}^{j} 
\f{k-j}(r) \cdot \v_{l-1}(r) \cdot \v_{j-l} (r),
\end{split}
\end{equation}
so $b_{k}^2$ only depends on $\f{l}$, $\v_{l}$ with $0\leq l<k$. 

Using the above expressions for $\partial_q^{2k} (F(f) -f v^2)$, we compute the $2k$-derivative of equation \eqref{edo33} with respect to $q$ and, evaluating this derivative at $q=0$, one finds that $\f{k}(r) = \partial_{q}^{2k}f(r;0)/(2k)!$ is a solution of the linear equation
\begin{equation*}
\f{k}''(r) + \frac{\f{k}'(r)}{r}-\f{k}(r)\frac{n^2}{r^2} + DF(\f{0}(r)) \cdot \f{k}(r) + b_k^1(r) - \frac{b_k^2(r)}{(2k)!}=0.   
\end{equation*}
Therefore, $\f{k}$ satisfies equation \eqref{eq:fk} with $b_k = -b_k^1 + b_k^2 /(2k)!$ having the form \eqref{defbk}.

We now deal with equation \eqref{edo44}. 
The procedure is exactly analogous to the one for equation \eqref{edo33}. 
First, we observe that, using the Leibnitz's rule, as well as identity \eqref{fkdfk}, 
\begin{align*}
\partial^{2k+1}_q & \left ( f(r;0) \cdot v'(r;0) + \frac{f(r;0)\cdot v(r;0)}{r} + 2 f'(r;0)\cdot v(r;0)\right)  \\
 &= (2k+1)! \big[ \f{0}(r) \big (\v_k'(r) + \frac{\v_k(r)}{r}\big ) + 2  \f{0}'(r)\v_k(r)\big ] + \bv^1_k(r),
\end{align*}
where, 
\begin{align*}
\bv_{k}^1(r) = (2k+1)!\sum_{i=0}^{k-1} \left (\f{k-i}(r)\big ( \v_i'(r)+ \frac{\v_i(r)}{r}\big ) + 
2\f{k-i}'(r)\v_i(r) \right ).
\end{align*}

It only remains to compute the $2k+1$-derivative with respect to $q$ of the nonlinear term $qf \big (\Omega(q) - \omega(f)\big )$. 
First, we define $\Ot(q) = q \Omega(q)$ and we compute $\partial_q^{2k+1} \big (f(r;0) \cdot \Ot(0)\big )$. 
We obtain, using Leibnitz's rule, 
\begin{align*} 
\partial_{q}^{2k+1} \big (f(r;0) \cdot \Ot(0) \big ) &=
\sum_{i=0}^{2k+1} \left ( \begin{array}{c} 2k+1 \\ i\end{array}\right ) \partial_q^{2k+1-i}f(r;0) \cdot \partial_{q}^{i} \Ot(0)  
 \\
&= (2k+1)! \f{0} \Omega_k + \bv_k^2(r),
\end{align*}
where
\begin{equation*}
\bv_k^2(r) = (2k+1)!\sum_{i=0}^{k-1} \f{k-i}(r) \cdot \Omega_i .
\end{equation*}

We now introduce $\om(x) = x \omega(x)$ and compute $\bv_k^3(r) := \partial_q^{2k+1} \big( q \om(f(r;q)) \big )_{|q=0}$:
\begin{align*}
\frac{\bv_{k}^3(r)}{(2k+1)!} 
&=\frac{1}{(2k)!} \partial_{q}^{2k} \big (\om(f(r;0))\big ) \\
&= \sum_{i=1}^{2k} D^i \om(f(r;0)) \sum_{\scriptsize{\begin{array}{c} k_1+\cdots+k_i=2k \\ 1\leq k_j\end{array}}} 
\frac{\partial_{q}^{k_1} f(r;0)}{k_1!} \cdots \frac{\partial_{q}^{k_i} f(r;0) }{k_i!}
 \\ &= \sum_{i=1}^{2k} D^i \om(f_0(r)) \sum_{\scriptsize{\begin{array}{c} k_1+\cdots+k_i=k \\ 1\leq k_j\end{array}}} 
\f{k_1}(r) \cdots \f{k_i}(r)
\end{align*}

Finally we compute the $2k+1$-derivative with respect to $q$ of equation \eqref{edo44} and we obtain
that $\v_k(r)= \partial_q^{2k+1} \v(r;0)/(2k+1)!$ satisfies equation \eqref{eq:vk} with $\bv_k$ defined as in
\eqref {defbvk}.
\end{proof}

\subsection{The leading order term}
We have already proved that the leading order terms $\f{0}$ and $\v_0$, along with a suitable choice of $\Omega_0=\omega(1)$ 
have to be solutions of the boundary problem:
\begin{equation}
\label{f0bis}
\begin{split}
&0=\f{0}''+\frac{\f{0}'}{r}-n^2\frac{\f{0}}{r^2}+\f{0}\lambda(\f{0})\\
&\f{0}(0)=0, \quad \lim_{r\to\infty} \f{0}(r) = 1,
\end{split}
\end{equation}
and 
\begin{equation}\label{v0bis}
\begin{split}
&0=\f{0}\v_0'+\frac{\f{0} \v_0}{r}+2\f{0}'\v_0 +  \f{0}(\Omega_0-\omega(\f{0})) \\
&\v_0(0)=0, \quad \lim_{r\to \infty} \v_0(r) <+\infty.
\end{split}
\end{equation}
It is clear that the nonlinear equation for $\f{0}$ is qualitatively different to the ones for $\f{k}$ with $k\geq 1$, which are all of them
nonhomogeneous linear equations. 
Moreover, in order to begin an induction procedure (which will be our strategy to prove Theorem \ref{main}) we 
also need to prove the existence and properties of $\v_{0}$. 
For that reason we study the leading order terms separately. 
Next proposition  proves the part of Theorem \ref{main} related to $\f{0}$ and $\v_0$.
\begin{proposition}\label{aefderf}
The boundary problem \eqref{f0bis} has a bounded solution $\f{0}>0$. 
Moreover, $\f{0}$ satisfies the following inequalities
\begin{equation*}
0<r\f{0}'(r) \leq  n^2 \f{0}(r),\qquad r>0,
\end{equation*}
and it has the asymptotic expansions,
\begin{equation}
\label{cotaf0}
\f{0}(r) = \alpha r^{n} + \BigO{r^{n+1}}, \;\; \text{as}\;\;r\to 0 \;\; \mbox{and}  \;\;\f{0}(r)  = 1 -\frac{n^2}{d r^2} + \BigO{r^{-4}},\;\;  \text{as}\;\;r\to +\infty.
\end{equation}
with $d= -\lambda'(1)$. 
We also have that $\f{0}'\in \BiO{3}{n-1}$ with $\lim_{r\to +\infty} r^3 \f{0}'(r) = 2\frac{n^2}{ d}$ and $\f{0}''\in \BiO{4}{\min\{0,n-2\}}$.

The problem \eqref{v0bis} has a bounded solution $\v_0$ if and only if $\Omega_0=\omega(1)$. 
Moreover, 
\begin{equation}
\label{v0def}
\v_0(r) = (r\f{0}^2(r))^{-1}\int_0^r t\f{0}(t)^2(\omega(\f{0}(t))-\Omega_0)\,dt, 
\end{equation}
and it satisfies the asymptotic expansions
\begin{equation}
\label{v0inf}
\v_0(r) = C r + \BigO{r^2}\;\;  \text{as}\;\;r\to 0,\;\;
\v_0(r) = -\frac{n^2\omega'(1)\log(r)}{d r} + \BigO{1/r} \;\;  \text{as}\;\;r\to +\infty.
\end{equation}
Moreover, $\v_0'\in \BiOv{2}{0}{1}$, $\v_0''\in \BiOv{3}{0}{1}$.

When $\omega$ is a monotone function the solution $v_0$ has constant sign.
\end{proposition}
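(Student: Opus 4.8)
The plan is to treat the two boundary-value problems \eqref{f0bis} and \eqref{v0bis} separately, since the equation for $\f{0}$ is genuinely nonlinear whereas, once $\f{0}$ is known, $\v_0$ can be written in closed integral form. For the existence of a positive, strictly increasing, bounded solution $\f{0}$ of \eqref{f0bis} with $\f{0}\to 1$ I would rely on a shooting argument for this $\qq=0$ problem; positivity and strict monotonicity give at once the lower bound $r\f{0}'>0$, and since $\f{0}$ increases to its limit we get the pointwise bound $0<\f{0}(r)<1$ for all $r>0$, a fact reused throughout. For the upper bound $r\f{0}'\le n^2\f{0}$ I would rewrite the equation as $(r\f{0}')'=n^2\f{0}/r-rF(\f{0})$ with $F(x)=x\lambda(x)$ and study $P:=r\f{0}'-n^2\f{0}$. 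Near $r=0$ the expansion $\f{0}\sim\alpha r^{n}$ forces $P\sim n(1-n)\alpha r^{n}\le 0$, while $P\to-n^2<0$ as $r\to\infty$; an interior positive maximum of $P$ is then excluded by a maximum-principle argument, in which the concavity hypothesis (A2) on $F$ is exactly what makes the signs close.

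For the asymptotics of $\f{0}$ I would argue at the two ends. Near $r=0$ a Frobenius analysis of the linearised equation produces the indicial exponents $\pm n$; regularity selects the $r^{n}$ branch, giving $\f{0}=\alpha r^{n}+\BigO{r^{n+1}}$, and differentiating the resulting series yields the stated bounds for $\f{0}'$ and $\f{0}''$ as $r\to 0$. At infinity I would set $\f{0}=1-g$ with $g\to 0$ and use $\lambda(\f{0})=\lambda'(1)(\f{0}-1)+\BigO{(\f{0}-1)^2}=d\,g+\BigO{g^2}$, with $d=-\lambda'(1)>0$; the dominant balance $-n^2/r^2+d\,g\approx 0$ then gives $g=n^2/(d r^2)+\BigO{r^{-4}}$, which is \eqref{cotaf0}. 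Estimating $\f{0}'$ and $\f{0}''$ from the equation (or by differentiating the expansion) gives $r^3\f{0}'\to 2n^2/d$ and the claimed bound on $\f{0}''$.

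The function $\v_0$ is then essentially explicit. Applying identity \eqref{propv} to \eqref{v0bis} turns it into $(\f{0}^2\v_0 r)'=r\f{0}^2(\omega(\f{0})-\Omega_0)$; integrating from $0$ and using $\f{0}(0)=0$ gives formula \eqref{v0def}, which is the unique solution regular at the origin because the homogeneous solution $c/(r\f{0}^2)$ blows up there. To locate the constraint on $\Omega_0$ I would examine the integrand as $t\to\infty$: since $\f{0}\to 1$ it behaves like $t(\omega(1)-\Omega_0)$, so the integral grows like $r^2$ and $\v_0\sim r(\omega(1)-\Omega_0)/2$ unless $\Omega_0=\omega(1)$; hence boundedness of $\v_0$ forces $\Omega_0=\omega(1)$, and conversely for that value one expands $\omega(\f{0})-\omega(1)=\omega'(1)(\f{0}-1)+\BigO{(\f{0}-1)^2}\sim-n^2\omega'(1)/(d t^2)$, so the integrand is $\sim-n^2\omega'(1)/(d t)$, the integral is $\sim-(n^2\omega'(1)/d)\log r$, and dividing by $r\f{0}^2\sim r$ yields \eqref{v0inf} together with $\v_0\to 0$. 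The near-$0$ form $\v_0=Cr+\BigO{r^2}$ follows by inserting $\f{0}\sim\alpha r^{n}$ into \eqref{v0def}, and the bounds for $\v_0'$, $\v_0''$ from differentiating the formula (the logarithmic factor at infinity appears on differentiating $\log(r)/r$). Finally, when $\omega$ is monotone, $\omega(\f{0}(t))-\omega(1)$ has constant sign on $(0,\infty)$ because $0<\f{0}<1$; with $\Omega_0=\omega(1)$ the integrand in \eqref{v0def} keeps that sign, and so does $\v_0$.

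The main obstacle is the rigorous justification of the behaviour at infinity rather than the formal balances themselves: for $g=1-\f{0}$ one must select the decaying solution of the linearised operator and control the nonlinear remainder uniformly in $r$, typically through a variation-of-parameters representation and a contraction estimate, and it is precisely there that the exact powers $r^{-2}$, $r^{-4}$ and the logarithmic factor in \eqref{v0inf} must be earned rather than guessed. The maximum-principle step for $r\f{0}'\le n^2\f{0}$ is the other delicate point, since it is where hypothesis (A2) must enter in an essential way.
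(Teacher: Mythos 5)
Your treatment of $v_0$ is correct and coincides with the paper's own proof: the integral representation \eqref{v0def} obtained from \eqref{propv}, the observation that the integrand behaves like $t(\omega(1)-\Omega_0)$ at infinity so that boundedness of $v_0$ forces $\Omega_0=\omega(1)$, the $\log(r)/r$ behaviour coming from $\omega(f_0(t))-\omega(1)\sim -n^2\omega'(1)/(d\,t^2)$, the linear behaviour at the origin, the bounds on $v_0'$ and $v_0''$, and the constant-sign argument using $0<f_0<1$ and monotonicity of $\omega$ are, step for step, what the paper does.

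The divergence is in the $f_0$ half, and there your proposal has a genuine gap when read as a standalone argument. The paper does not prove existence, the inequality $0<rf_0'(r)\le n^2 f_0(r)$, or the expansions \eqref{cotaf0}: it imports all of them from the earlier work \cite{Aguareles2011} (noting that the small-$r$ expansion persists for $\lambda\in\mathcal{C}^\infty$ and that the large-$r$ behaviour follows from Lemma 2.14 and Remark 2.15 there), and then adds only two things, namely the orders of $f_0'$ and $f_0''$ read off from the identity $(rf_0')'=n^2f_0/r-F(f_0)$, and the value $\lim_{r\to\infty}r^3f_0'(r)=2n^2/d$, computed by L'H\^opital from $\lim_{r\to\infty}r^2(1-f_0(r))=n^2/d$ \emph{after} invoking the prior work for the existence of that limit. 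You instead sketch a from-scratch proof (shooting, a maximum principle for $P=rf_0'-n^2f_0$, Frobenius at $0$, dominant balance at infinity). The outline is plausible; in particular your maximum-principle step can indeed be closed, since $P'=n^2(1-n^2)f_0/r-n^2P/r-rF(f_0)$ and hypothesis (A2) together with $\lambda(0)=1$, $\lambda(1)=0$ gives $F\ge0$ on $[0,1]$, so an interior positive maximum of $P$ is impossible once one knows $0<f_0<1$. But the pieces you explicitly defer ("must be earned rather than guessed") — existence via shooting and the rigorous selection of the decaying branch with the exact rates $r^{-2}$ and $r^{-4}$ — are precisely the content of the cited paper, not details one can wave at. Note also that "differentiating the expansion" to obtain $r^3f_0'\to 2n^2/d$ is not legitimate by itself: asymptotic expansions cannot in general be differentiated, and the paper's argument needs the prior knowledge that $\lim_{r\to\infty}r^3f_0'(r)$ exists before L'H\^opital can be used to evaluate it. So, relative to the paper you are attempting strictly more than is required (the paper cites); as a self-contained proof, the existence and large-$r$ asymptotics of $f_0$ remain unproved.
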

\begin{proof}
As it is shown in \cite{Aguareles2011}, the boundary problem  \eqref{f0bis} has a unique bounded solution.
The inequalities and the expansions for $\f{0}(r)$ were rigourosly proven in \cite{Aguareles2011} 
for the case that $\lambda$ is an analytic function (Section 2 as $\r \to 0$ and Sections 4 and 5 as $r\to +\infty$). 
The expansion for $r$ small enough is also true in the case $\lambda \in \co^{\infty}$  and the behaviour as $r\to +\infty$ 
can be straightforwardly deduced from Lemma 2.14 and Remark 2.15 in \cite{Aguareles2011}. 
Then, re-writting equation  \eqref{f0bis} we get the identity 
$(r \f{0}'(r))' = n^2 r^{-1}\f{0}(r) - \f{0}(r)\lambda(\f{0}(r))$. 
From this identity we deduce the asymptotic expansions for $\f{0}'$ and 
$\f{0}''$.

We also know that $\lim_{r\to +\infty} r^{3} \f{0}'(r)$ exists from the previous work in \cite{Aguareles2011}. 
To compute this limit, we use L'H\^opital's rule, and the asymptotic expansion for $\f{0}$:
\begin{equation*}
\frac{n^2}{d} = \lim_{r\to +\infty} r^2 \big (1-\f{0}(r)\big ) =  \lim_{r\to +\infty} r^2 \int_{r}^{+\infty} f'(\xi)\, d\xi =
 \lim_{r\to +\infty} \frac{r^3 f'(r)}{2}
\end{equation*}
and so the results for $\f{0}$ are proven.

As for $\v_0$, since $\v_0(0)=0$ and it satisfies equation \eqref{v0bis}, using property \eqref{propv}, gives to leading order expression \eqref{v0def}. 
Now, using the asymptotic behaviour of $\f{0}(r)$ as $r\to\infty$ in \eqref{v0def}, gives 
\begin{equation*}
\begin{split}
\v_0(r)  = &\big(r-2 n^2/(d^2r)+o(r^{-1})\big)^{-1}\left(\int_0^{r_0} t\f{0}(t)^2(\omega(\f{0}(t))-\Omega_0)\,dt\right.\\
&+ \left.\int_{r_0}^r t\left(\omega(1)-\Omega_0-\frac{n^2}{t^2\,d}\big(2(\omega(1)-\Omega_0)+\omega'(1)\big)+\LitO{t^{-2}} \right) \, dt \right),
\end{split}
\end{equation*}
provided $r\geq r_0$ and $r_0$ is large enough. This last expression shows that in order for $v_0$ to be bounded at infinity, 
we have to impose $\omega(1)=\Omega_0$ and so this gives the asymptotic behaviour of $\v_0(r)$ as $r\to\infty$ presented in \eqref{v0inf}.


Also, the asymptotic behaviour of $\v_0(r)$ as $r\to 0$ is easily obtained by using the asymptotic expression of $\f{0}$ in equation \eqref{v0def},
\begin{equation*}
\v_0(r)\sim\frac{\int_0^r t\alpha^2 t^{2n}(\omega(\alpha t^n)-\omega(1))\,dt}{\alpha^2 r^{2n+1}}=\frac{\omega(0)-\omega(1)}{2n+2} r + \BigO{r^2}.
\end{equation*}


The asymptotic behaviour of both $\v_0'$ and $\v_0''$ follows from the fact that $\v_0\in \BiOv{1}{1}{1}$ is a solution of equation \eqref{v0bis}
along with the asymptotic behaviour of $\f{0},\f{0}'$.

It only remains to check that $\v_0(r)$ has constant sign when $\omega(x)$ is a monotone function. 
For instance, according to \eqref{v0def} if $\omega(x)$ is decreasing, since $\Omega_0=\omega(1)$, $\omega(0)-\Omega_0\geq \omega(f_0(t))-\Omega_0\geq \omega(1)-\Omega_0=0$, and hence $\v_0(r)\geq 0$ for all $r\geq 0$. Likewise, if $\omega(x)$ is increasing, $\v_0(r)\leq 0$ for all $\r\geq 0$.
\end{proof}
\subsection{Existence and properties of $\f{k}$. An induction procedure}
In this section we are going to prove the results of Theorem \ref{main}
related to $\f{k},\v_k$ for $k\geq 1$. We will use the notation and results from
Proposition \ref{eqfkvk}. More precisely, we will prove that the problems:
\begin{equation}
\label{eq:fkbis}
\begin{split}
&\f{k}''+\frac{\f{k}'}{r}-n^2\frac{\f{k}}{r^2}+DF(\f{0}(r))\f{k}=\b_k(r)\\
&\f{k}(0)=0, \qquad  \f{k}(r) \;\;\text{bounded}\;\; r\geq0
\end{split}
\end{equation}
with $F(x) = x \lambda(x)$, and 
\begin{equation}
\label{eq:vkbis}
\begin{split}
&\f{0}\v_k'+\frac{\f{0} \v_k}{r}+2\f{0}'\v_k +  \f{0}\Omega_k = \bv_k(r), \\
&\v_k(0)=0,\qquad v_{k}(r) \;\;\text{bounded}\;\; r\geq0
\end{split}
\end{equation}
have solutions $\f{k}$ and  $
\v_k$ provided 
\begin{equation}
\Omega_k = 0.
\end{equation}
Recall that $\b_k$,  $\bv_k$, were defined in Proposition \ref{eqfkvk}, for  $k\ge 1$.
To prove this result we will use 
an induction procedure. 

We first recall that, if $\f{0}, \f{1}\cdots, \f{k-1}$ and $\v_{0},\v_{1}, \cdots, \v_{k-1}$ are known, then, the independent term
$\b_{k}$ of \eqref{eq:fkbis} is determined and henceforth $\f{k}$ satisfies a linear non-homogeneous equation.
If we are able to prove the existence of such a solution, then, by property \eqref{propv} and taking into account that
$\v_k(0)=0$, we will have an explicit expression for $\v_k$ which depends on $\Omega_k$ and $\bv_k$:
\begin{equation}\label{formulavk}
\v_k(r) = \big (r\f{0}^2(r)\big )^{-1}\int_{0}^r t \f{0}(t) \big (\bv_k(t) - \f{0}(t)\Omega_k\big)\, dt
\end{equation}
Recall here that $\bv_k$ depends only on $\f{0},\cdots, \f{k}$ and $\v_{0},\cdots, \v_{k-1}$.

Therefore, once one knows how to solve the equation for $\f{k}$, the function $\v_{k}$ is totally determined.
Since all the equations for $\f{k}$ have the same shape, it is mandatory to study the existence of solutions of linear equations of the form
\begin{equation}\label{eqfkgeneral}
\begin{split}
&g''(r)+\frac{g'(r)}{r}-n^2\frac{g(r)}{r^2}+DF(\f{0}(r))g(r)=h(r)\\
&g(0)=0,\qquad g(r) \;\;\text{bounded}\;\; r\geq0.
\end{split}
\end{equation}
We state the following technical lemma which will be proven in Subsection \ref{FixedPointEquation} by using the Fixed Point Theorem in a suitable
Banach space.

\begin{lemma}\label{soleqfkgeneral}
Let $h:[0,+\infty)\to \R$ be a $C^2$ function. We define
\begin{equation*}
\mathcal{E}[h](r):=h''(r) + \frac{h'(r)}{r} - h(r)\frac{n^2}{r^2} + \big[DF(\f{0}(r))+d\big] h(r)
\end{equation*}
with $F(x)=x\lambda(x)$ and $d= -\lambda'(1)$. Assume that $\mathcal{E}[h] \in \BiO{3}{n-1}$, that is:
\begin{equation}\label{propheqgeneral}
\mathcal{E}[h](r) =\BigO{r^{n-1}},\;\; r \to 0,\qquad \mathcal{E}[h](r) (r) = \BigO{r^{-3}},\;\; r\to +\infty.
\end{equation}
Then there exists a unique bounded solution $g$ of the boundary problem \eqref{eqfkgeneral}. Moreover,
if $\Delta g= g+hd^{-1}$, we have that
\begin{equation*}
\Delta g\in \BiO{3}{n},\;\; \Delta g'\in \BiO{3}{n-1},\;\; \text{and}\;\; \Delta g''\in \BiO{3}{\max\{n-2,0\}}.
\end{equation*}
In particular, $\lim_{r\to +\infty} g(r) = 0$. 
\end{lemma}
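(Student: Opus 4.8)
The plan is to exploit the explicit shift built into the definition of $\Delta g$. Since $F(x)=x\lambda(x)$ gives $DF(1)=\lambda(1)+\lambda'(1)=-d$, and since the lemma's operator satisfies $\mathcal{E}[h]=\mathcal{L}[h]+dh$, where $\mathcal{L}[h]:=h''+h'/r-n^2h/r^2+DF(\f{0})h$ is the operator of \eqref{eqfkgeneral}, the substitution $g=\Delta g-h/d$ turns $\mathcal{L}[g]=h$ into
\[
\mathcal{L}[\Delta g]=h+\tfrac1d\mathcal{L}[h]=h+\tfrac1d\bigl(\mathcal{E}[h]-dh\bigr)=\tfrac1d\,\mathcal{E}[h].
\]
By hypothesis the new source $\tfrac1d\mathcal{E}[h]$ lies in the favourable class $\BiO{3}{n-1}$, so it suffices to solve $\mathcal{L}[w]=\psi$ with $\psi\in\BiO{3}{n-1}$, regular at the origin and bounded, and then read off $w=\Delta g\in\BiO{3}{n}$ together with the bounds on $w'$ and $w''$. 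I would also record at this stage that $\mathcal{E}[h]=\BigO{r^{-3}}$ forces $h(r)\to 0$ (for $n\ge 1$ the term $-n^2h/r^2$ would otherwise contribute only $\BigO{r^{-2}}$; for $n=0$ the same follows from the decay of the potential), a fact that will deliver $\lim_{r\to+\infty}g=0$ at the end.

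To solve $\mathcal{L}[w]=\psi$ I would peel off the model operator. Writing $\mathcal{L}[w]=M[w]+p(r)w$ with
\[
M[w]:=w''+\frac{w'}{r}-n^2\frac{w}{r^2}-dw,\qquad p(r):=DF(\f{0}(r))+d,
\]
the potential $p$ is bounded, equals $1+d+\LitO{1}$ as $r\to 0$ and, because $1-\f{0}=\BigO{r^{-2}}$ by \eqref{cotaf0} and $DF$ is smooth, decays like $p(r)=\BigO{r^{-2}}$ as $r\to+\infty$. The operator $M$ is the modified Bessel operator, with homogeneous solutions $I_n(\sqrt d\,r)$ (regular, $\sim r^n$ at $0$, growing at $\infty$) and $K_n(\sqrt d\,r)$ (decaying at $\infty$, singular at $0$). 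These are independent, so $M$ has no nontrivial solution that is both regular at the origin and bounded at infinity, and the variation-of-parameters Green operator $\mathcal{G}$ for those boundary conditions is well defined. This recasts the problem as the fixed point equation $w=\mathcal{G}[\psi]-\mathcal{G}[p\,w]$.

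Existence and uniqueness then follow from a contraction argument in a weighted sup-norm Banach space $X$ whose weight is $r^n$ near the origin and $r^{-3}$ at infinity, tailored to $\BiO{3}{n}$. I would show that $\mathcal{G}$ maps a source in $\BiO{3}{n-1}$ into $X$ (at infinity $M$ is dominated by $-d$, so $\mathcal{G}$ acts like multiplication by $-1/d$ and preserves the $r^{-3}$ rate, while the $I_n\sim r^n$ factor produces the $r^n$ vanishing at $0$), and that $w\mapsto\mathcal{G}[p\,w]$ is a contraction. The main obstacle is precisely this last estimate: the gain comes from $p=\BigO{r^{-2}}$, which makes $\mathcal{G}[p\,\cdot]$ improve the decay by two powers at infinity, but $p$ is only $\BigO{1}$ near $r=0$, so the map is not obviously a contraction globally, and controlling it is tantamount to ruling out a zero-energy bounded state of $\mathcal{L}$. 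I expect to resolve this by splitting $[0,+\infty)=[0,R]\cup[R,+\infty)$: on the tail $\sup_{r\ge R}|p|$ is small for $R$ large, giving a genuine contraction, while on the compact core $[0,R]$ the equation is a regular singular Sturm--Liouville problem handled by the Frobenius/regular-at-origin theory, the two being matched. Uniqueness among bounded solutions is then free, since every bounded solution lies in $X$ and fixed points in $X$ are unique.

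Finally, the asymptotics of the derivatives are routine from the integral representation. Differentiating $w=\mathcal{G}[\psi-pw]$ under the integral sign, using the derivatives and the small/large-argument asymptotics of $I_n,K_n$, yields $w'\in\BiO{3}{n-1}$; reading $w''=\psi-w'/r+n^2w/r^2-DF(\f{0})w$ off the equation then gives $w''\in\BiO{3}{n-2}$ for $n\ge 2$, while the sharper $\BiO{3}{0}$ for $n\in\{0,1\}$ comes from the even/odd Frobenius expansion of the regular solution at the origin, which is the origin of the exponent $\max\{n-2,0\}$. Undoing the shift, $g=\Delta g-h/d$ is bounded with $g(0)=0$, and $\lim_{r\to+\infty}g(r)=0$ follows by letting $r\to\infty$ in $\mathcal{L}[g]=h$: boundedness of $g$ forces its derivatives to vanish by \eqref{simplefact}, $p\,g\to0$, $h\to0$, and $DF(\f{0})\to-d$, leaving $-d\lim g=0$.
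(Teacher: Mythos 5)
Your skeleton coincides with the paper's: the same shift $\Delta g = g + h d^{-1}$ turning the source into $d^{-1}\mathcal{E}[h]$, the same modified Bessel model with $I_n,K_n$, the same variation-of-parameters Green operator, and the same plan of a contraction in a weighted sup-norm space. The gap is exactly at the step you yourself flag as the main obstacle: proving contractivity of $w\mapsto \mathcal{G}[p\,w]$ when $p(r)=DF(\f{0}(r))+d$ is of order one near the origin (indeed $p(0)=1+d$). Your proposed fix --- split $[0,+\infty)$ into a far tail where $\sup|p|$ is small and a compact core treated by Frobenius theory, ``the two being matched'' --- is circular. The matching is a $2\times 2$ linear system (coefficient of the regular-at-origin solution against coefficient of the bounded-at-infinity solution), and its solvability is \emph{precisely} the statement that the homogeneous problem $\mathcal{L}[w]=0$, $w(0)=0$, $w$ bounded, has only the trivial solution; that is, the ``zero-energy bounded state'' you name is not ruled out, only reformulated. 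And it genuinely must be ruled out by using structure beyond asymptotics: for an arbitrary potential with the same behaviour as $DF(\f{0})$ ($\BigO{1}$ at $0$, $-d+\BigO{r^{-2}}$ at infinity), the linearization can have a bounded kernel element vanishing at the origin, in which case both existence and uniqueness in the lemma fail. So no argument using only the size of $p$ can be complete.

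What the paper does at this point, and what is absent from your proposal, is a positivity argument tied to the ground state. The weight of the Banach space $\mathcal{X}$ is not $r^n$/$r^{-3}$ but $\w(s)=\f{0}'(s/\sqrt d)$, see \eqref{defweigthfunction}. Since $\f{0}'$ solves the differentiated equation \eqref{eqderf0}, the function $T$ in \eqref{defTnormal} that bounds the Lipschitz constant of the fixed-point map can be rewritten as $T=\w-\mathcal{T}[h_0]$ with $h_0$ as in \eqref{defh0}. Hypothesis (A2) yields the sign condition \eqref{derlambda}, $0<DF(\ft_0)/d+1$, and the inequality $r\f{0}'(r)\le n^2\f{0}(r)$ of Proposition \ref{aefderf} yields $h_0>0$; hence $0\le T<\w$ pointwise, and the explicit limits $\lim_{s\to 0}\mathcal{T}[h_0](s)/\w(s)=\lim_{s\to+\infty}\mathcal{T}[h_0](s)/\w(s)=1$ give $\Vert T\Vert_{\w}<1$, i.e.\ a genuine global contraction. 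This is where (A2) and the qualitative properties of $\f{0}$ enter the proof; your argument never invokes them, which is the structural reason it cannot close. A secondary omission: your claim that uniqueness is ``free'' because every bounded solution lies in the weighted space is not immediate --- with a weight vanishing like a power of $r$ at the origin, a bounded solution must first be shown to vanish to that order and to decay at infinity, which the paper obtains by iterating Lemma \ref{lemma:Top} (bootstrapping from $\BigO{s^2}$ up to $\BigO{s^n}$ at the origin and from boundedness to $\BiO{3}{n}$).
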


Now we begin our induction scheme. 
We begin with $\f{1}$ which satisfies the equation \eqref{eq:f1}, that is:
\begin{equation*}
\f{1}''+\frac{\f{1}'}{r}-n^2\frac{\f{1}}{r^2}+DF(\f{0})\f{1}=\b_1(r)
\end{equation*}
with $\b_1(r) = \f{0}(r) \v_{0}^2(r)$.
We want to apply Lemma \ref{soleqfkgeneral} and
for that we check that $\mathcal{E}[\b_1] \in \BiO{3}{n-1}$. 
We point out that, by Proposition \ref{aefderf},
$\b_1= \f{0} \v^{2}_0 \in \BiOv{2}{n+2}{2}$, $\b_1' \in \BiOv{3}{n+1}{2}$, $\b_1''\in \BiOv{4}{n}{2}$
and, consequently:
\begin{equation*}
\b_1''(r) + \frac{\b_1'(r)}{r} - \b_1(r) \frac{n^2}{r^2} \in \BiOv{4}{n}{2}.
\end{equation*}

In addition, $\big [DF(\f{0}(r))+d\big] \b_1(r) = \BigO{r^{n+2}}$ as $\r\to 0$ and, since $DF(1)=\lambda'(1)=-d$, and using Proposition \ref{aefderf}
 for the asymptotics of $\f{0}$ as $r\to \infty$
 \begin{equation}\label{expDF(f0)}
\big [DF(\f{0}(r))+d\big ]= \BigO{\f{0}(r)-1} = \BigO{r^{-2}}
\end{equation}
and this gives $\big [DF(\f{0}(r))+d\big ] \b_{1}(r)= \BigO{r^{-4}\log^2 r}$.
Therefore we conclude that
$\mathcal{E}[\b_1]\in \BiOv{4}{n}{2} \subset \BiO{3}{n-1}$. Then, Lemma \ref{soleqfkgeneral} gives the existence of a solution $\f{1}$ of problem \eqref{eq:fkbis} for $k=1$ with
$\Delta \f{1} = \f{1}+d^{-1} \b_{1}$ 
satisfying
\begin{equation*}
\Delta \f{1} \in \BiO{3}{n}\;\; \Delta \f{1}'\in \BiO{3}{n-1},\;\; \text{and}\;\; \Delta \f{1}''\in \BiO{3}{\max\{n-2,0\}},
\end{equation*}
which gives:
\begin{equation*}
\f{1}\in \BiOv{2}{n}{2}\;\; \f{1}' \in \BiOv{3}{n-1}{2},\;\; \text{and}\;\; \f{1}''\in \BiO{3}{\max\{n-2,0\}}.
\end{equation*}

Now we deal with $\v_1$ and $\Omega_1$. As we state in \eqref{formulavk},
$$
\v_1(r) = \big (r\f{0}^2(r)\big )^{-1}\int_{0}^r t \f{0}(t) \big (\bv_1(t) - \f{0}(t)\Omega_1\big)\, dt,
$$
with $\bv_1$ defined in Proposition \ref{eqfkvk}, formula \eqref{eq:c1}. 
Using that $\f{1}\in \BiOv{2}{n}{2}$, $\f{1}' \in \BiOv{3}{n-1}{2}$,
along with $\v_0 \in \BiOv{1}{1}{1}$ and $\v_0' \in \BiOv{2}{0}{1}$,
we have that $\bv_1 \in \BiOv{2}{n}{2}$. 
Therefore, $\v_1$ will be a bounded solution if and only if
$r \f{0}(r) \big (\bv_1(r) - \f{0}(r)\Omega_1\big)$ is a bounded function. 
This implies that
$$
0= \lim_{r\to +\infty} \bv_1(r) - \f{0}(r)\Omega_1 = \Omega_1.
$$
Hence we actually have that
$$
\v_1(r) = \big (r\f{0}^2(r)\big )^{-1}\int_{0}^r t \f{0}(t) \bv_1(t)\, dt.
$$
Now we need to compute the asymptotic behaviour of $\v_1$. 
Clearly, for $r\to 0$, since $\bv_1 \in \BiOv{2}{n}{2}$,
$\v_1(r) = \BigO{r}$. Now we deal with $r\to +\infty$. We notice that, if $r_0$ is big enough,
$$
\left |\int_{r_0}^r t \f{0}(t) \bv_1(t)\, dt \right | \leq C \int_{r_0}^r  \frac{\log^2 t}{t} \, dt \leq C \log^3 r.
$$
Then
$$
\vert \v_1(r) \vert \leq C \frac{\log^3 r}{r},\qquad \text{as}\;\; r\to +\infty.
$$
Summarizing, $\v_1 \in \BiOv{1}{1}{3}$. 
Moreover, from \eqref{eq:vkbis} with $k=1$:
$$
\f{0}(r)\v_{1}'(r) = \bv_1(r) -2 \f{0}'(r) \v_1(r) - r^{-1} \f{0}(r) \v_1(r)
$$
which implies that $\v_1' \in \BiOv{2}{0}{3}$. We can also deduce that $\v_1''\in \BiOv{3}{0}{3}$.

Now we state the induction hypothesis: the unique bounded solution $\f{k-1}$, $k\geq 2$, of problem \eqref{eq:fkbis} satisfies
\begin{equation}\label{inductionf}
\f{k-1} \in \BiOv{2}{n}{2(k-1)},\;\; \f{k-1}' \in \BiOv{3}{n-1}{2(k-1)},\;\;\f{k-1}''\in \BiO{3}{\max\{n-2,0\}}.
\end{equation}
Moreover, problem \eqref{eq:vkbis} has bounded solution $\v_{k-1}$ if and only if $\Omega_{k-1}=0$ and in this case,
\begin{equation}\label{inductionv}
\v_{k-1} \in \BiOv{1}{1}{2k-1},\;\; v_{k-1}'\in \BiOv{2}{0}{2k-1},\;\; v_{k-1}''\in \BiOv{3}{0}{2k-1}.
\end{equation}

We begin first by checking that $\b_{k} \in \BiOv{2}{n+1}{2k}$ and $ \bv_{k} \in \BiOv{2}{n}{2k}$.
Indeed, by induction hypothesis \eqref{inductionf} and \eqref{inductionv} and formula \eqref{defbk} for $b_k$, we have that
\begin{equation*}
\b_{k} \in \BiOv{2}{2n}{2k} \cap \BiOv{2}{n+2}{2k} \subset  \BiOv{2}{n+1}{2k}.
\end{equation*}
We emphasize that if $n=1$, $2n<n+2$, but if $n\geq 2$, $2n\geq n+2$. 
To unify both cases we have considered $\b_{k} \in \BiOv{2}{n+1}{2k}$.
Analogously one see that $\bv_{k}\in \BiOv{2}{n}{2k}$.

In order to compute the orders for $\b_{k}'$ and $\bv_{k}'$ we take into account that, by induction hypothesis if $l\leq k-1$, the functions 
$\f{l}'(r),\v_{l}'(r),\v_{l}''(r)$ are of order of $\f{l}(r)r^{-1}$, $\v_{l}(r) r^{-1}$ and $\v_{l}(r)r^{-2}$ respectively, so the same happens for
the products of these functions. 
Moreover, $\f{l}'' \in \BiO{3}{\max\{n-2,0\}}$, for $l\leq k-1$. 
Then, tedious but easy computations yield:
\begin{equation}\label{orderbk1}
\b_{k} \in \BiOv{2}{n+1}{2k},\;\;\b_{k}'\in \BiOv{3}{n}{2k},\;\;\b_{k}'' \in \BiO{3}{n-1}
\end{equation}
and
\begin{equation}\label{orderck1}
\bv_{k}\in \BiOv{2}{n}{2k},\;\;\bv_{k}'\in \BiOv{3}{n-1}{2k}.
\end{equation}

The first consequence is that $\mathcal{E}[\b_k] \in \BiO{3}{n-1}$ and hence by Lemma \ref{soleqfkgeneral} there exists a unique solution $\f{k}$
of problem \eqref{eq:fkbis} satisfying that
\begin{equation*}
\f{k}+d^{-1}\b_{k} \in \BiO{3}{n},\;\; \f{k}'+d^{-1}\b_{k}' \in \BiO{3}{n-1},\;\;\f{k}''+d^{-1} \b_{k}''\in \BiO{3}{\max\{n-2,0\}}
\end{equation*}
and taking into account the expansions of $\b_{k}$ in \eqref{orderbk1}, the induction hypothesis \eqref{inductionf} is fullfilled for $\f{k}$.

Finally we deal with $\v_{k}$. 
We proceed likewise as $\v_{1}$. 
From identity \eqref{formulavk},
$$
\v_{k}(r) = \big (r\f{0}^2(r)\big )^{-1}\int_{0}^r t \f{0}(t) \big (\bv_{k}(t) - \f{0}(t)\Omega_{k}\big)\, dt,
$$
$\v_{k}$ will be a bounded solution if and only if
$r \f{0}(r) \big (\bv_{k}(r) - \f{0}(r)\Omega_{k}\big)$ is a bounded function and consequently, 
since $\bv_{k} \in \BiOv{2}{n}{2k}$, 
$$
0= \lim_{r\to +\infty} \bv_{k}(r) - \f{0}(r)\Omega_{k} = \Omega_{k}.
$$
Therefore the induction hypothesis for $\Omega_{k}$ is also satisfied. We rewrite $\v_{k}$ as
$$
\v_{k}(r) = \big (r\f{0}^2(r)\big )^{-1}\int_{0}^r t \f{0}(t) \bv_{k}(t)\, dt,
$$
and compute the asymptotic behaviour of $\v_{k}$. 
Since $\bv_{k}(r),\f{0}(r) = \BigO{r^{n}}$ as $r\to 0$, one deduces that
$\v_{k}(r) = \BigO{r}$. 
As in the case $k=1$, if $r_0$ is big enough,
$$
\left |\int_{r_0}^r t \f{0}(t) \bv_{k}(t)\, dt \right | \leq C \int_{r_0}^r  \frac{\log^{2k} t}{t} \, dt \leq C \big (\log r\big )^{2k+1}
$$
and hence
$$
\vert \v_{k}(r) \vert \leq C \frac{\log^{2k+1} r}{r},\qquad \text{as}\;\; r\to +\infty.
$$
Summarizing, $\v_{k} \in \BiOv{1}{1}{2k+1}$. 
Moreover,
$$
\f{0}(r)\v_{k}'(r) = \bv_{k}(r) -2 \f{0}'(r) \v_{k}(r) - r^{-1} \f{0}(r) \v_{k}(r)
$$
implies that $\v_{k}' \in \BiOv{2}{0}{2k+1}$ and we finally deduce that $\v_{k}''\in \BiOv{3}{0}{2k+1}$ by using
\eqref{orderck1}.

This ends the proof of Theorem \ref{main}. 

\subsection{Proof of Lemma \ref{soleqfkgeneral}}\label{FixedPointEquation}
We first write equation \eqref{eqfkgeneral} in a more suitable way, i.e. as a fixed point equation.
Adding and subtracting the term
$d\, g$, where $d=-\lambda'(1)$, which is positive since $\lambda'(1)<0$, performing the change of variables $s=\sqrt{d}\,r$ and denoting by $\ga(s)=g(s/\sqrt{d})$, $\ha(s)= d^{-1} h(s/\sqrt{d})$ yields
\begin{equation}
\label{bessel}
\begin{split}
&\ga''(s)+\frac{\ga'(s)}{s}-\ga(s)\left(\frac{n^2}{s^2}+1\right)= \ha(s) -\ga(s)\left[\frac{DF(\ft_0(s))}{d}+1\right ]\\
&\ga(0)=0,\qquad \ga(s)\;\; \text{bounded} \;\;s\geq 0,
\end{split}
\end{equation}
where we call $\ft_0(s) =f_0(s/\sqrt{d})$.

As we showed in \eqref{expDF(f0)}
\begin{equation}\label{limsinfDelta}
d^{-1}DF(\ft_0(s))+1= \BigO{s^{-2}},\;\;\; \text{as}\;\;\; s \to +\infty.
\end{equation}
This implies that the dominant term of equation \eqref{bessel} as $s\to \infty$ is the singular equation $-\tilde g(s)= \tilde h(s)$, therefore, it is natural to write
$\ga= -\ha+\Delta g $ with $\Delta g$ being a solution of
\begin{equation}\label{eqDeltag}
\Delta g''+\frac{\Delta g'}{s}-\Delta g\left(\frac{n^2}{s^2}+1\right)=-\tilde{\mathcal{E}}[\tilde h](s)-\Delta g\left(\frac{DF(\ft_{0}(s))}{d}+1\right),\\
\end{equation}
where $\tilde {\mathcal {E}}[\tilde h]$ defined by
$$
\tilde {\mathcal {E}}[\tilde h](s)=\frac{1}{d^2}\mathcal {E}[h](s/\sqrt{d})=\tilde h''(s)+\frac{\tilde h'(s)}{s}
-\tilde h(s)\frac{n^2}{s^2}+\left[\frac{DF(\ft_0(s))}{d}+1\right]\tilde h(s).
$$
Recall that the operator $\mathcal{E}$ is defined in the statement of the lemma.
The boundary conditions are $\Delta g(0)=0$ and $\Delta g(s) $ is  bounded for $s\geq 0$.

Our goal now is to write equation \eqref{eqDeltag} as a fixed point equation.
We emphasize that the dominant part of this equation is the left hand side. Indeed, on the one hand, using \eqref{limsinfDelta},
one sees that the linear term in the right hand side of equation \eqref{eqDeltag}, contributes a small quantity to the equation for large values of $s$,  being the left hand side of equation \eqref{eqDeltag} the dominant part as $s \to \infty$.
On the other hand, as $s\to 0$, even if this linear term is of order one, the dominant part of equation \eqref{bessel} is provided by the first three terms of the left hand side, that is $\Delta g''(s)+\Delta g'(s)/s-\Delta g(s) n^2/s^2$, and so the right hand side in \eqref{bessel} is also relatively small for small values of $s$.

To obtain a fixed point equation we note that the homogeneous modified Bessel equation:

$$
\varphi''(s)+\frac{\varphi'(s)}{s}-\varphi(s)\left(\frac{n^2}{s^2}+1\right)= 0
$$
has two well-known linearly independent solutions, namely $I_n(s)$ and $K_n(s)$ known as the modified Bessel functions of the first and second kind respectively (see \cite{abramowitz}).
Hence, a fundamental matrix of solutions of the homogeneous equation corresponding to equating to zero the left hand side in \eqref{bessel} reads,
\[
 M=\left(
 \begin{array}{cc}
 K_{n}(s) & I_{n}(s)\\
 K'_{n}(s) & I'_{n}(s)
 \end{array}\right),
\]
whose Wronskian is known to be $W(K_{n}(s),I_{n}(s))=1/s$.
We denote by
\begin{equation}
\label{Rk}
\begin{split}
\mathcal{R}[\Delta g](s)=&\tilde {\mathcal {E}}[\tilde h](s)+\Delta g(s)\left(\frac{DF(\ft_0(s))}{d}+1\right).
\end{split}
\end{equation}
We recall here that $\Delta g$ has to be a bounded solution of problem \eqref{eqDeltag} with boundary condition $\Delta g(0)=0$.
Therefore, using the variation of parameters formula, equation \eqref{eqDeltag} becomes a fixed point equation:
\begin{equation}
\label{F1}
\Delta g(s) = \opf1[\Delta g](s):= K_{n}(s)\int_{0}^{s}\xi I_{n}(\xi) \mathcal{R}[\Delta g](\xi)\, d\xi\\
+I_{n}(s)\int_{s}^{\infty}\xi K_{n}(\xi) \mathcal{R}[\Delta g](\xi)\, d\xi.
\end{equation}

In order to prove the existence of the solution of \eqref{F1} (and consequently of problem \eqref{bessel}), we will prove that the linear operator
$\mathcal{F}$ is contractive in some appropriate Banach space $\mathcal{X}$.
However to guarantee the uniqueness of this solution in the space of bounded functions, we need
to carefully study the following linear operator:
\begin{equation}\label{def:T}
\mathcal{T}[\psi](s) = K_{n}(s)\int_{0}^{s}\xi I_{n}(\xi) \psi(\xi)\, d\xi
+I_{n}(s)\int_{s}^{\infty}\xi K_{n}(\xi)\psi(\xi)\, d\xi,
\end{equation}
where $\psi$ is a function defined on $J=[0,+\infty)$.
We notice that $\opf1 = \mathcal{T}\circ \mathcal{R}$.

The operators $\mathcal{T}$ and $\opf1$ are studied in the lemmas \ref{lemma:Top} and \ref{prop:f1} whose  proofs are  deferred to the end of this section.

\begin{lemma}\label{lemma:Top}
Let $\mathcal{T}$ be the linear operator defined in \eqref{def:T}.
Let $\psi$ be a function defined on $J=[0,+\infty)$.
We take $0\leq m<n-1$ and $l\geq 0$.
Then
\begin{equation*}
\psi = \BiO{l}{m} \Longrightarrow \mathcal{T}[\psi] = \BiO{l}{m+2},
\end{equation*}
where the notation $\BiO{l}{m}$ was introduced in \eqref{notation}.
In particular, if $\psi$ is bounded, then $\mathcal{T}(\psi)\in \BiO{0}{2}$.

In the cases $\psi = \BiO{l}{n-1}$ or $\psi = \BiO{l}{n}$ we can only conclude that  $\mathcal{T}[\psi] = \BiO{l}{n}$.

In addition, if $\psi \in \co^{i}(J)$, then $\mathcal{T}[\psi] \in \co^{i+1}(J)$ and
\begin{equation*}
\psi = \BiO{l}{m} \Longrightarrow \mathcal{T}[\psi]'= \BiO{l}{m+1}.
\end{equation*}
In the cases  $\psi = \BiO{l}{n-1}$ or $\psi = \BiO{l}{n}$ we conclude  $\mathcal{T}[\psi]' = \BiO{l}{n-1}$.
\end{lemma}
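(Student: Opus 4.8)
The plan is to estimate the two integral pieces of $\mathcal{T}[\psi]$ separately, in each of the regimes $s\to 0$ and $s\to+\infty$, against the hypothesis $\psi=\BiO{l}{m}$, using the classical asymptotics of the modified Bessel functions (see \cite{abramowitz}). Recall that as $\xi\to 0$ one has $I_n(\xi)=\BigO{\xi^{n}}$ and $K_n(\xi)=\BigO{\xi^{-n}}$ (for $n\geq 1$), while as $\xi\to+\infty$ one has $I_n(\xi)\sim(2\pi\xi)^{-1/2}e^{\xi}$ and $K_n(\xi)\sim(\pi/2\xi)^{1/2}e^{-\xi}$; differentiation shifts the algebraic exponents at $0$ by one and leaves the exponential rates at $+\infty$ unchanged, and throughout one uses the Wronskian identity $W(K_n,I_n)=1/s$. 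The whole argument is then bookkeeping of these rates.

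First I would treat the behaviour as $s\to+\infty$, which is the cleaner of the two because of the exponential factors. With $\psi(\xi)=\BigO{\xi^{-l}}$, the integrand of the first piece behaves like $\xi^{1/2-l}e^{\xi}$, so a Laplace-type estimate gives $\int_0^s\xi I_n(\xi)\psi(\xi)\,d\xi=\BigO{s^{1/2-l}e^{s}}$; multiplying by $K_n(s)=\BigO{s^{-1/2}e^{-s}}$ produces $\BigO{s^{-l}}$. Symmetrically, the second integrand is $\sim\xi^{1/2-l}e^{-\xi}$, the tail $\int_s^\infty$ is $\BigO{s^{1/2-l}e^{-s}}$, and multiplication by $I_n(s)$ again yields $\BigO{s^{-l}}$. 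Hence $\mathcal{T}[\psi]=\BigO{s^{-l}}$ as $s\to+\infty$, so the decay rate $-l$ is preserved, as required in every case of the statement.

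The genuinely delicate regime is $s\to 0$, where the rate depends on the position of $m+2$ relative to $n$. The first piece is insensitive to this: near the origin its integrand is $\sim\xi^{\,n+1+m}$, so $\int_0^s\xi I_n(\xi)\psi(\xi)\,d\xi=\BigO{s^{\,n+m+2}}$ and, after multiplication by $K_n(s)=\BigO{s^{-n}}$, it always contributes $\BigO{s^{\,m+2}}$. The second piece is controlled by the tail integral $\int_s^\infty\xi K_n(\xi)\psi(\xi)\,d\xi$, whose integrand behaves like $\xi^{\,1-n+m}$ near the lower endpoint, and three cases arise. If $m<n-2$ the tail diverges algebraically like $s^{\,2-n+m}$; multiplying by $I_n(s)=\BigO{s^{n}}$ gives $\BigO{s^{\,m+2}}$, which dominates the residual homogeneous contribution $\BigO{s^{n}}$, so $\mathcal{T}[\psi]=\BiO{l}{m+2}$. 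If $m>n-2$ (in particular for $m=n-1$ and $m=n$) the tail converges to a constant and the piece is $\sim\text{const}\cdot I_n(s)=\BigO{s^{n}}$, which now dominates, yielding only $\mathcal{T}[\psi]=\BiO{l}{n}$. The main obstacle is the borderline $m=n-2$, i.e.\ $m+2=n$, where the relation $(m+2)^2-n^2=0$ signals resonance with the homogeneous solution $s^{n}$: the tail integrand is then $\sim\xi^{-1}$, its integral produces a factor $\log(1/s)$, and the honest conclusion carries this logarithm. I would isolate this case explicitly; for the applications in Section~\ref{power} the relevant exponents satisfy $m=n-1$, so one lands in the second case and the weaker $\BiO{l}{n}$ bound stated there is exactly what is used.

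Finally, for the derivative I would differentiate the defining formula directly. Differentiating the variable limits produces the two boundary terms $sK_n(s)I_n(s)\psi(s)$ and $-sI_n(s)K_n(s)\psi(s)$, which cancel identically, leaving $\mathcal{T}[\psi]'(s)=K_n'(s)\int_0^s\xi I_n(\xi)\psi(\xi)\,d\xi+I_n'(s)\int_s^\infty\xi K_n(\xi)\psi(\xi)\,d\xi$. Re-running the two estimates above with $K_n',I_n'$ in place of $K_n,I_n$ lowers the exponent at the origin by one, giving $\BiO{l}{m+1}$ in the clean range and $\BiO{l}{n-1}$ in the borderline cases $m=n-1,n$, while at $+\infty$ the leading exponential contributions cancel and one still obtains $\BigO{s^{-l}}$. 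The same representation yields the regularity claim: if $\psi\in\co^{i}(J)$ then the integrals of the $\co^{i}$ integrands $\xi I_n\psi$ and $\xi K_n\psi$ against the smooth kernels show that $\mathcal{T}[\psi]'$ inherits the continuity of $\psi$, so that $\mathcal{T}[\psi]\in\co^{i+1}(J)$; it is precisely the cancellation of the boundary terms that prevents the loss of a derivative.
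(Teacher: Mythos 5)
Your proposal is correct and follows essentially the same route as the paper's own proof: the same splitting of the two integral pieces, the same modified-Bessel asymptotics in the two regimes $s\to 0$ and $s\to+\infty$, and the same observation that the boundary terms cancel (via the Wronskian $W(K_n,I_n)=1/s$) so that $\mathcal{T}[\psi]'(s)=K_n'(s)\int_0^s\xi I_n(\xi)\psi(\xi)\,d\xi+I_n'(s)\int_s^\infty\xi K_n(\xi)\psi(\xi)\,d\xi$. If anything, your case analysis as $s\to 0$ is finer than the paper's single estimate $|\mathcal{T}[\psi](s)|\leq \bar{c}\max\{s^{m+2},s^{n}\}$, since you isolate the resonant borderline $m=n-2$, where a logarithmic factor genuinely appears, a point the paper's statement and proof pass over but which is harmless for the exponents actually used later ($m=n-1$, and the iteration in the uniqueness argument).
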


We now define the Banach space where the solution $\Delta g$ will belong.
We consider  the weight function
\begin{equation}\label{defweigthfunction}
\w(s) = f'_0\big(s/\sqrt{d}\big )
\end{equation}
and the functional space
\begin{equation}
\label{banach}
\mathcal{X}=\{\varphi :J\to\mathbb{R}, \quad \varphi \in \co^0(J),\quad \left|\frac{\varphi (s)}{\w(s)} \right|<+\infty\}.
\end{equation}
We endow $\mathcal{X}$ with the norm
\begin{equation*}
\Vert \varphi \Vert_{\w} =\sup_{s\geq 0} \left | \frac{\varphi(s)}{\w(s)}\right |,
\end{equation*}
and it becomes a Banach space. In addition, since by Proposition \ref{aefderf}, $\w\in \BiO{3}{n-1}$, 
\begin{equation}\label{BanachX}
\mathcal{X} = \BiO{3}{n-1} \cap \,\co^{0}(J).
\end{equation}
\begin{lemma}
\label{prop:f1}
For any given $ \varphi \in \mathcal{X}$, let $\opf1_{\varphi}$ be the linear operator defined by \eqref{F1}:
\begin{equation}
\label{opF1}
\opf1_{\varphi}[g](s)= K_{n}(s)\int_{0}^{s}\xi I_{n}(\xi) \mathcal{R}_\varphi[g](\xi)\, d\xi\\
+I_{n}(s)\int_{s}^{\infty}\xi K_{n}(\xi) \mathcal{R}_\varphi[g](\xi)\, d\xi
\end{equation}
where, analogously to \eqref{Rk}, we denote  $\mathcal{R}_\varphi$  by
\begin{equation*}
\mathcal{R}_\varphi[g](s)= \varphi (s)+g(s)\left(\frac{DF(\ft_0(s)}{d}+1\right).
\end{equation*}

Then,
\begin{itemize}
\item[(i)] If $g\in \mathcal{X}$, then $\opf1_{\varphi}[g] \in \co^1(J)$ and  $\opf1_{\varphi}[g],\opf1_{\varphi}[g]'\in \mathcal{X}$.
In fact $\opf1_{\varphi}[g] (s) \in  \BiO{3}{n}$.
\item [(ii)] $\opf1_{\varphi}$ is contractive in $\mathcal{X}$.
\end{itemize}
\end{lemma}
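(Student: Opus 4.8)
The plan is to exploit the factorization $\opf1_{\varphi}=\mathcal{T}\circ\mathcal{R}_{\varphi}$ noted right after \eqref{def:T}, so that everything reduces to the mapping properties of $\mathcal{T}$ in Lemma \ref{lemma:Top} combined with the orders of $\f{0}$ from Proposition \ref{aefderf}. Throughout I write $\theta(s):=DF(\ft_0(s))/d+1$, so $\mathcal{R}_\varphi[g]=\varphi+g\,\theta$. For part (i) I would first fix the order of $\mathcal{R}_\varphi[g]$: since $\varphi,g\in\mathcal{X}=\BiO{3}{n-1}\cap\co^0(J)$ by \eqref{BanachX}, and $\theta$ is bounded as $s\to0$ (because $DF(0)=\lambda(0)=1$) while $\theta=\BigO{s^{-2}}$ as $s\to+\infty$ by \eqref{limsinfDelta}, the product $g\,\theta$ lies in $\BiO{5}{n-1}$, whence $\mathcal{R}_\varphi[g]\in\BiO{3}{n-1}$ and is continuous. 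Applying Lemma \ref{lemma:Top} in the boundary case $m=n-1$ then gives $\opf1_\varphi[g]=\mathcal{T}[\mathcal{R}_\varphi[g]]\in\BiO{3}{n}$ and $\opf1_\varphi[g]'\in\BiO{3}{n-1}$, while the regularity clause ($\psi\in\co^0\Rightarrow\mathcal{T}[\psi]\in\co^1$) yields $\opf1_\varphi[g]\in\co^1(J)$. Since $\BiO{3}{n}\subset\BiO{3}{n-1}$, both $\opf1_\varphi[g]$ and $\opf1_\varphi[g]'$ belong to $\mathcal{X}$, which is exactly (i).

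For part (ii), linearity in $g$ gives $\opf1_\varphi[g_1]-\opf1_\varphi[g_2]=\mathcal{T}[(g_1-g_2)\,\theta]$, so contractivity is a bound on $\mathcal{T}[\eta\,\theta]$ for $\eta\in\mathcal{X}$. The structural facts I would use are that $\theta\ge0$ (by (A2), $DF$ is decreasing and $0\le\ft_0\le1$, so $DF(\ft_0)\ge DF(1)=\lambda'(1)=-d$) and that the kernels $I_n,K_n$ are positive. Then $|\eta(\xi)\theta(\xi)|\le\Vert\eta\Vert_\w\,\w(\xi)\theta(\xi)$, and positivity of the kernels gives $|\mathcal{T}[\eta\theta](s)|\le\Vert\eta\Vert_\w\,\mathcal{T}[\w\theta](s)$. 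Hence it suffices to establish the pointwise estimate $\mathcal{T}[\w\theta](s)\le\kappa\,\w(s)$ with $\kappa<1$, from which $\Vert\opf1_\varphi[g_1]-\opf1_\varphi[g_2]\Vert_\w\le\kappa\Vert g_1-g_2\Vert_\w$ follows immediately.

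To prove the pointwise estimate I would compare $u:=\mathcal{T}[\w\theta]$ with $\w$ through the modified Bessel operator $\mathcal{L}_0 y:=y''+y'/s-(n^2/s^2+1)y$. The variation-of-parameters content of \eqref{def:T} (using $W(K_n,I_n)=1/s$) shows that $\mathcal{T}$ inverts $-\mathcal{L}_0$ with the conditions $y(0)=0$, $y$ bounded, so $\mathcal{L}_0 u=-\w\theta$. On the other hand, differentiating the rescaled equation \eqref{f0} for $\ft_0$ and using $\w=\sqrt d\,\ft_0'$ yields the identity $\mathcal{L}_0\w=-\w\theta+\w/s^2-2n^2\sqrt d\,\ft_0/s^3$. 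Subtracting, $z:=\w-u$ satisfies
\[
\mathcal{L}_0 z=\frac{\sqrt d}{s^2}\Big(\ft_0'-\frac{2n^2\ft_0}{s}\Big).
\]
The inequality $s\ft_0'\le n^2\ft_0$ from Proposition \ref{aefderf} forces the right-hand side to be $\le-\sqrt d\,n^2\ft_0/s^3<0$, so $z$ is a strict supersolution of $\mathcal{L}_0$, whose zeroth-order coefficient $-(n^2/s^2+1)$ is negative. As $z\ge0$ at both ends ($\w(0)\ge u(0)=0$ and $z\to0$ as $s\to+\infty$), the maximum principle gives $z\ge0$, i.e. $\mathcal{T}[\w\theta]\le\w$ and $\kappa\le1$.

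The remaining and \emph{main} difficulty is upgrading $\kappa\le1$ to the strict $\kappa<1$ required for a contraction. Here I would use that $\w\theta\in\BiO{5}{n-1}$, so $\mathcal{T}[\w\theta]\in\BiO{5}{n}$ by Lemma \ref{lemma:Top}, whereas $\w\in\BiO{3}{n-1}$; consequently the quotient $\mathcal{T}[\w\theta]/\w$ tends to $0$ both as $s\to0$ (like $s$) and as $s\to+\infty$ (like $s^{-2}$). Being positive, continuous on $(0,+\infty)$ and vanishing at both endpoints, this quotient attains its supremum at some interior $s^*$, where $z(s^*)>0$ by the strong maximum principle (a nonnegative supersolution of $\mathcal{L}_0$ vanishing at an interior minimum would be identically $0$, impossible since $\mathcal{L}_0 z<0$). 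Thus $\mathcal{T}[\w\theta](s^*)<\w(s^*)$ and $\kappa=\mathcal{T}[\w\theta](s^*)/\w(s^*)<1$. The delicate points I would watch are the boundary behaviour at $s=0$ in the case $n=1$, where $\w(0)\neq0$, and the careful justification of the maximum principle (and of attainment of the supremum) on the unbounded interval $J$.
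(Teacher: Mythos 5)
Your part (i) and your reduction of part (ii) to the pointwise bound $\mathcal{T}[\w\theta](s)\le \kappa\,\w(s)$ with $\kappa<1$ (where $\theta(s)=DF(\ft_0(s))/d+1$) coincide with the paper's steps: the paper's function $T$ in \eqref{defTnormal} is exactly your $\mathcal{T}[\w\theta]$, and its inequality \eqref{derlambda} is your positivity of $\theta$. Where you genuinely diverge is in how that bound is proved. The paper establishes the explicit identity $T=\w-\mathcal{T}[h_0]$ (formula \eqref{defTalt}) by integrating the equation \eqref{eqderf0bis} satisfied by $\w$ against the Bessel kernels, using the integration-by-parts property \eqref{propL} and the Wronskian relation $s\bigl(I_n'(s)K_n(s)-K_n'(s)I_n(s)\bigr)=1$; positivity of $h_0$ (from $r\f{0}'\le n^2\f{0}$) then gives $0\le T<\w$, and two H\^opital computations showing $\mathcal{T}[h_0](s)/\w(s)\to 1$ as $s\to 0$ and as $s\to+\infty$ yield $\Vert T\Vert_{\w}<1$. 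You instead stay at the level of the differential equation: from $\mathcal{L}_0\circ\mathcal{T}=-\mathrm{id}$ (differentiation under the integral, already implicit in the derivation of \eqref{F1}) and the same differentiated $\f{0}$-equation, you obtain $\mathcal{L}_0 z=-h_0<0$ for $z=\w-T$ — your right-hand side is precisely $-h_0$, and you invoke the same inequality $s\ft_0'\le n^2\ft_0$ — and you conclude $z>0$ on $(0,+\infty)$ by an interior-minimum argument, which is valid since $z(0)\ge 0$, $z\to 0$ at infinity, and the zeroth-order coefficient of $\mathcal{L}_0$ is negative. Strictness of $\kappa$ then follows not from H\^opital limits but from order estimates: $\w\theta\in\BiO{5}{n-1}$, so Lemma \ref{lemma:Top} gives $T\in\BiO{5}{n}$, and the exact rates of $\w$ from Proposition \ref{aefderf} force $T/\w\to 0$ at both ends, so the supremum of $T/\w$ is attained at an interior point where $z>0$. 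Both proofs thus rest on the same two structural facts — $\w$ solves the differentiated $\f{0}$-equation, and $r\f{0}'\le n^2\f{0}$ makes $h_0>0$ — but your route trades the paper's integration by parts (with its boundary-term bookkeeping) and explicit limit computations for a maximum principle plus mapping properties of $\mathcal{T}$ that are needed anyway, while the paper's computation yields the finer information $\mathcal{T}[h_0]/\w\to 1$. The two delicate points you flag are real but harmless: the supremum is attained because $T/\w$ is continuous, nonnegative and vanishes at both ends, and for $n=1$ one has $\w(0)=\alpha>0$ while $T(0)=0$, so the quotient extends continuously by $0$ at the origin.
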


\begin{proof}[End of the proof of Lemma \ref{soleqfkgeneral}]
We have to deal with both, existence and uniqueness of solutions of problem \eqref{bessel}.
We recall that we look for $\ga$ as $\ga= -\ha+\Delta g $, with $\Delta g$ being a solution of the fixed point equation
$\Delta g= \opf1[\Delta g] = \mathcal{T}[ \mathcal{R}[\Delta g]]$ given in \eqref{F1}.
For the existence we will use mainly Lemma \ref{prop:f1} where $\varphi(s)= \tilde{\mathcal {E}}[\tilde h](s)$.
Then, hypothesis \eqref{propheqgeneral} of Lemma \ref{soleqfkgeneral} and the fact that $\ft_{0}' \in \BiO{3}{n-1}$,
assure that $\tilde{\mathcal{E}}[\tilde h]$ belongs to $\mathcal{X}$ and henceforth
Lemma \ref{prop:f1} provides us with a solution
$\Delta g\in \mathcal{X}$ such that $\Delta g \in \BiO{3}{n}$, $\Delta g' \in \BiO{3}{n-1}$. In addition, since $\Delta g$ is a solution of
the differential equation \eqref{eqDeltag}, $\Delta g'' \in \BiO{3}{\max\{n-2,0\}}$.

Now it only remains to check that $\ga= -\ha+ \Delta g$ is the unique bounded solution of our problem or
equivalently, we see that $\Delta g$ is the only bounded solution of \eqref{eqDeltag}.
Let ${\Delta \bar g}$ be a bounded solution of equation \eqref{eqDeltag}. Then it has to be solution of the
fixed point equation ${\Delta \bar g} = \opf1[{\Delta \bar g}] = (\mathcal{T} \circ \mathcal{R})[{\Delta \bar g}]$.
 We note that
 $$
 \mathcal{R}[{\Delta \bar g}](s) \leq C |{\Delta \bar g}(s)| + |\tilde {\mathcal{E}}[\tilde h](s)|.
 $$
Therefore, since at least ${\Delta \bar g}$ is bounded and $\tilde{\mathcal{E}}[\tilde h]\in \mathcal{X}$,
Lemma \ref{lemma:Top} with $l=m=0$ implies that
${\Delta \bar g}(s) = \BigO{s^2}$ as $s\to 0$, then applying iteratively this lemma,
we obtain that ${\Delta \bar g}(s)=\BigO{s^n}$
as $ s\to 0$.
In particular, since $\w(s)\in\BiO{3}{n-1}$:
\begin{equation}\label{uniques0}
|{\Delta  \bar g}(s) |\leq C\w(s) ,\qquad \text{as}\qquad s\to 0.
\end{equation}

Now we study the behaviour of ${\Delta \bar g}$ as $s\to +\infty$.
We first recall that, according to \eqref{limsinfDelta}
$
d^{-1}DF(\ft_{0}(s)) +1 = O(s^{-2}).
$
Then, since $\tilde {\mathcal{E}}[\tilde h]\in \mathcal{X}\subset \BiO{3}{n-1}$ and ${\Delta \bar g}(s)\in\BiO{0}{n} $,
we conclude that $\mathcal{R}[{\Delta \bar g}]\in\BiO{2}{n-1}$.
Now we apply Lemma \ref{lemma:Top} and we obtain
that ${\Delta \bar g}= \opf1[\bar{\Delta g}] = \mathcal{T}\big [ \mathcal{R}[{\Delta \bar g}]\big ]  \in \BiO{2}{n}$.
Therefore, repeating the previous argumentation, since $\tilde {\mathcal{E}}[\tilde h]\in \BiO{3}{n-1}$ and that $\Delta {\bar g} \in \BiO{2}{n}$  we get that $\Delta {\bar g} \in \BiO{3}{n}$.
Hence, as ${\Delta \bar g}\in \mathcal{X}$ and  $\mathcal{F}$ is a contractive operator over $\mathcal{X}$,
${\Delta \bar g}=\Delta g$.
\end{proof}

The remaining part of this Section is devoted to prove the technical lemmas \ref{lemma:Top} and \ref{prop:f1}.

\subsubsection{Proof of Lemma \ref{lemma:Top}}

Let $m<n-1$, $l\geq 0$ and $\psi:J=[0,+\infty) \to \mathbb{R}$ be a  function in $ \BiO{l}{m}$.
To study the behavior of $\mathcal {T}[\psi]$ (see \eqref{def:T}) as $s\to 0$ and $s\to \infty$,  we recall the asymptotic expansions of
the modified Bessel functions $K_n, I_n$ and their derivatives.

When $s\to 0$ one has:
$$
K_{n}(s) \sim \frac{\Gamma(n)}{2} (s/2)^{-n}\quad  I_{n}(s)\sim \frac{1}{\Gamma(n+1)} (s/2)^{n},
$$
$$
K'_{n}(s) \sim -\frac{n\Gamma(n)}{4} (s/2)^{-n-1},\quad I'_{n}(s)\sim \frac{n}{2\Gamma(n+1)}(s/2)^{n-1}  .
$$
And when $s\to \infty$:
$$
K_{n}(s) \sim e^{-s}\sqrt{\pi/2s}, \quad I_{n}(s) \sim e^s/\sqrt{2\pi s}
$$
$$
K'_{n}(s) \sim -e^{-s}\sqrt{\pi/2s},\quad I'_{n}(s) \sim e^s/\sqrt{2\pi s}.
$$
From now on we will use the expansions of the Bessel functions without explicit mention.

We start by proving the behaviour of $\mathcal{T}[\psi]$
as $s\to 0$.
As  $\psi \in \BiO{l}{m}$, there exists $C>0$ such that $\vert \psi(s) \vert \leq C s^{m}$ for any $s\in J$.
Let $s_0>0$ be such that the above expansions for $s \to 0$ are true  for $0\leq s<s_0$.
We have that
\begin{align*}
\vert \mathcal{T}[\psi](s)\vert \leq & C  K_n(s) \int_{0}^s \xi I_n(\xi) \xi^m \,d \xi + C I_n(s) \int_{s}^{+\infty} \xi K_n(\xi)\xi^m\,d \xi \\
\leq & c\left( s^{-n} \int_{0}^s \xi^{n+1+m} \,d\xi +  s^{n} \int_{s}^{s_0} \xi^{-n+1+m} \, d\xi \right.
+\left. s^{n} \int_{s_0}^{+\infty} \xi ^{1+m}K_n(\xi)\, d\xi \right)
\\ \leq & \bar{c} \max\{ s^{m+2}, s^{n}\} =\bar{c} s^{m+2}.
\end{align*}
where $c,\bar{c}$ are generic constants depending only on $n,s_0$. 
We have used that, by hypothesis, $m<n-1$ and that $\int_{s_0}^{+\infty} \xi ^{1+m} K_n(\xi)\, d\xi$ is bounded.

We proceed likewise with the behavior of $\mathcal{T}[\psi](s)$ as $s\to\infty$.
We take $s_1>0$ be such that the  expansions of the Bessel functions for $s\to \infty$ are true for $s>s_1$.
As  $\psi \in \BiO{l}{m}$, there exists  $\overline{C}$ be such that $\vert \psi(s) \vert \leq \overline{C}s^{-l}$ for $s>s_1$ and
$\vert \psi(s) \vert \leq\overline{C}$ for any $s\in J$.
We obtain
\begin{align*}
\vert \mathcal{T}[\psi](s)\vert \leq &
\overline{C} K_n(s) \int_{0}^s \xi I_n(\xi) \xi^{-l} \,d \xi + \bar C I_n(s) \int_{s}^{+\infty} \xi K_n(\xi)\xi^{l}\,d \xi \\
\leq & c e^{-s}s^{-1/2}\left(  \int_{0}^{s_1} \xi I_n(\xi)\,d\xi
+   \int_{s_1}^{s} \xi^{-l+1/2} e^{\xi}\,d\xi
+ \int_{s}^{+\infty}   \xi^{-l+1/2} e^{-\xi} \, d\xi \right)\\
\leq &\bar{c} s^{-l},
\end{align*}
where, as before, the values of $c,\bar{c}$ only depend on $n,s_1$. In conclusion $\mathcal{T}[\psi]\in \BiO{l}{m+2}$.
In particular, applying the above inequalities for $m=l=0$, that is $\psi$ bounded, we have that $\mathcal{T}[\psi]\in \BiO{0}{2}$. 

In addition, if $\psi$ is continuous, and since every integral in the
definition of $\mathcal{T}$ is uniformly convergent, we have that
\begin{equation*}
\mathcal{T}[\psi]'(s) = K_{n}'(s)\int_{0}^{s}\xi I_{n}(\xi) \psi(\xi)\, d\xi
+I_{n}'(s)\int_{s}^{\infty}\xi K_{n}(\xi)\psi(\xi)\, d\xi.
\end{equation*}
is also a continuous and bounded function.
We proceed as above to check the asymptotic expansions for $\mathcal{T}[\psi]'$.

\subsubsection{Proof of Lemma \ref{prop:f1}}
We notice that $\opf1_{\varphi}=\mathcal{T} \circ \mathcal{R}_{\varphi}$.
Let $g\in \mathcal{X}$.
It is clear that $\mathcal{R}_{\varphi}[g]\in \mathcal{X}$ and, by \eqref{BanachX}, 
$g, \mathcal{R}_{\varphi}[g]\in \BiO{3}{n-1}$. In consequence, the first item is a straightforward consequence of Lemma \ref{lemma:Top}.

To prove (ii) we need to show that there exists a constant $0<K<1$ such that, for any $g_1,g_2 \in \mathcal{X}$, 
$\Vert \opf1_{\varphi}[g_1]-\opf1_{\varphi}[g_2]\Vert_{\w} \leq K \Vert g_1-g_2\Vert_{\w}$. We first point out that, since
$0\leq \ft_0(s) \leq 1$ and by hypothesis (A2), $\partial_x^2(x\lambda(x))<0$, the function $DF(x)=x\lambda'(x)+\lambda(x)$ is decreasing. Therefore, using that by hypothesis (A1) $\lambda(1)=0$:
\begin{equation*}
-d=\lambda'(1)+\lambda(1)=DF(1)<DF(\ft_0(s))\leq DF(0)=\lambda(0)=1,
\end{equation*}
which gives
\begin{equation}
\label{derlambda}
0<\frac{DF(\ft_0(s))}{d}+1<\frac{1}{d}+1.
\end{equation}
Now we find that
\begin{equation*}
\begin{split}
\vert \opf1_{\varphi}[g_1](s)-\opf1_{\varphi}[g_2](s)\vert \leq  
&  K_{n}(s)\int_{0}^{s}\xi I_{n}(\xi)
\left(\frac{DF(\ft_0(\xi))}{d}+1\right)
\vert g_1(\xi)-g_2(\xi) \vert \, d\xi
\\&+I_{n}(s)\int_{s}^{\infty}\xi K_{n}(\xi)
\left(\frac{DF(\ft_0(\xi))}{d}+1\right)
\vert g_1(\xi)-g_2(\xi)\vert \, d\xi
\\&\leq  \Vert g_1-g_2\Vert_{\w} T(s) \,
\end{split}
\end{equation*}
where the function $T$ is defined by
\begin{equation}\label{defTnormal}
\begin{split}
T(s) :=  K_{n}(s)&\int_{0}^{s}\xi I_{n}(\xi) \left(\frac{DF(\ft_{0}(\xi))}{d}+1\right)\w(\xi)\, d\xi\\
&+I_{n}(s)\int_{s}^{\infty}\xi K_{n}(\xi) \left(\frac{DF(\ft_{0}(\xi))}{d}+1\right)\w(\xi)\, d\xi.
\end{split}
\end{equation}
We first observe that $T(s)>0$ $\forall s$ since both $K_n(s)$, $I_n(s)$  are positive, the weight function (see \eqref{defweigthfunction}) $\w(s)>0$
and inequality \eqref{derlambda}.

We now want to show that $\Vert T \Vert_{\w}<1$. We begin by rewriting $T$ in a more appropriate way.
Concretely, we will check that
\begin{equation}\label{defTalt}
\begin{split}
T(s)&= \w(s) -\mathcal{T}[h_0](s) \\
&=\w(s) - K_n(s) \int_{0}^s \xi I_n(\xi) h_0(\xi)\, d\xi
- I_n(s) \int_{s}^{+\infty} \xi K_n(\xi) h_0(\xi)\, d\xi
\end{split}
\end{equation}
being $\mathcal{T}$ the linear operator defined in Lemma \ref{lemma:Top} and
\begin{equation}\label{defh0}
h_0(s) = \frac{\sqrt{d}}{s^3}\left [2n^2 \ft_{0}(s) - \frac{s}{\sqrt{d}} \f{0}'\left (\frac{s}{\sqrt{d}}\right)\right ].
\end{equation}
To prove expression \eqref{defTalt} we deal with the differential equation that $\f{0}' (r)$ satisfies.
Indeed, since $\f{0}(r)$ is a solution of equation \eqref{f0}, $\f{0}'(r)$ is a solution of
the nonhomogeneous linear equation:
\begin{equation}\label{eqderf0}
\varphi''(r) + \frac{\varphi'(r)}{r} - \varphi(r) \frac{n^2}{r^2} +DF(\f{0}(r))\varphi(r)
= - \frac{2n^2}{r^3} \f{0}(r) + \frac{1}{r^2} \f{0}'(r).
\end{equation}
Performing the change $\psi(s) = \varphi(s/\sqrt{d})$ to this equation and taking into account that
$\ft_{0}(s) = \f{0}(s/\sqrt{d})$, we get that $\w(s) = \f{0}'(s/\sqrt{d})$ is a solution of
\begin{equation}\label{eqderf0s}
\psi''(s) + \frac{\psi'(s)}{s} - \psi(s) \frac{n^2}{s^2} + \frac{DF(\ft_{0}(s))}{d}\psi(s)
= - \frac{2n^2 \sqrt{d}}{s^3} \ft_{0}(s) + \frac{1}{s^2} \f{0}'(s/\sqrt{d}).
\end{equation}
We define
\begin{equation}\label{defLT}
\mathcal{L}[\psi](s) = \psi''(s) + \frac{\psi'(t)}{s} - \psi(s) \left (\frac{n^{2}}{s^{2}} + 1 \right).
\end{equation}
We notice that $\mathcal{L}[K_n]=\mathcal{L}[I_n]=0$
and that equation \eqref{eqderf0s}, for $\w$, can be rewritten as
\begin{equation}\label{eqderf0bis}
\left (\frac{DF(\ft_{0}(s))}{d}+1\right )\w(s) =
-\mathcal{L}[\w](s) -h_0(s) .
\end{equation}
The linear differential operator $\mathcal{L}$
satisfies that, upon integrating by parts, 
\begin{equation}\label{propL}
-\int_{a}^b \xi B_{n}(\xi) \mathcal{L}[\psi](\xi) \,d\xi
=- \left . \xi \psi'(\xi)B_n(\xi)\right |^{b}_{a} + \left . \xi \psi(\xi) B_n'(\xi)\right |^{b}_a,
\end{equation}
being either $B_n=K_n$ or $B_n=I_n$. This property was strongly used in \cite{Aguareles2011}.
Using that $\w$ satisfies equation \eqref{eqderf0bis}, property \eqref{propL} and that 
$s(I'_n(s) K_n(s)-K'_n(s)I_n(s) )=1$, we have that definition \eqref{defTnormal} of $T$ becomes
\begin{equation*}
\begin{split}
T(s)=&  -K_{n}(s)\int_{0}^{s}\xi I_{n}(\xi) \mathcal{L}[\w](\xi)\, d\xi-I_{n}(s)\int_{s}^{\infty}\xi K_{n}(\xi)
\mathcal{L}[\w](\xi) \, d\xi. \\
&- K_n(s) \int_{0}^s \xi I_n(\xi) h_0(\xi)\, d\xi
- I_n(s) \int_{s}^{+\infty} \xi K_n(\xi) h_0(\xi)\, d\xi \\
&= \w(s) - \mathcal{T}[h_0](s)
\end{split}
\end{equation*}
and \eqref{defTalt} is proven. 

Since by Proposition \ref{aefderf}, for any $r\ge 0$, $rf_0'(r)\le n^2f_0(r)$, we have that $h_0$, defined in \eqref{defh0}, satisfies that $h_0(s) >0$ 
if $s>0$, 
and therefore $\mathcal{T}[h_0](s)>0$, $s>0$. Consequently:
\begin{equation*}
0\le T(s) < \w(s) \;\;\; s>0.
\end{equation*}
Now, in order to check that $\Vert T \Vert_{\w}<1$ it only remains to see that
\begin{equation*}
\lim_{s\to 0} \frac{\mathcal{T}[h_0](s)}{\w(s)}\neq 0,\qquad \lim_{s\to +\infty} \frac{\mathcal{T}[h_0](s)}{\w(s)} \neq 0.
\end{equation*}
Indeed, recalling again definition \eqref{defweigthfunction} of $\w$, and using Proposition \ref{aefderf}, we have that
\begin{align*}
&\lim_{s\to 0} \w(s) s^{-n+1} = n\alpha d^{-(n-1)/2},\;\; \lim_{s\to +\infty} s^{3} \w(s)  = 2n^2 \sqrt{d},\\
\\ &\lim_{s\to 0} h_0(s)s^{-n+3}= \frac{n (2n-1)\alpha}{d^{(n-1)/2}}, \;\; \lim_{s\to +\infty} s^3 h_0(s) =2n^2 \sqrt{d}.
\end{align*}
Let $s_0>$ be small enough. By applying H\^opital's rule
\begin{align*}
\lim_{s\to 0} \frac{\mathcal{T}[h_0](s)}{\w(s)} &=\frac{d^{(n-1)/2}}{2n^2 \alpha} \lim_{s\to 0} \left( s^{-2n+1} \int_{0}^s \xi^{n+1} h_0(\xi)
+ s \int_{s}^{s_0} \xi^{-n+1} h_0(\xi)\right ) \\
&= \frac{d^{(n-1)/2}}{2n^2 \alpha} \lim_{s\to 0 } \left (\frac{s^{n+1} h_0(s)}{(2n-1) s^{2n-2}}+
s^{-n+3}h_0(s) \right)
\\&=\frac{1}{2n} + \frac{2n-1}{2n} =1.
\end{align*}
Now we deal with $s\to +\infty$. Let then $s_0>0$ be big enough. Then,
\begin{align*}
\lim_{s\to +\infty} \frac{\mathcal{T}[h_0](s)}{\w(s)}& = \frac{1}{4 n^2 \sqrt{d}} \lim_{s\to +\infty} \left (s^{{5/2}} e^{-s}\int_{s_0}^s e^{\xi} \xi^{1/2} h_0(\xi) \, d\xi
+ s^{5/2} e^{s} \int_{s}^{+\infty} e^{-\xi} \xi^{1/2} h_0(\xi)\, d \xi \right )\\
&=   \frac{1}{4 n^2 \sqrt{d}} \lim_{s\to +\infty} 2 s^3 h_0(s) =1.
\end{align*}

\textbf{Acknowledgements} M. Aguareles has been supported in part by grants from the Spanish Government MTM2011-27739-C04-03, MTM2014-52402-C3-3-P and and is a member of the Catalan research group 2014SGR1083. I. Baldom\'{a} and T.M-Seara have been partially supported by the Spanish
MINECO-FEDER Grant MTM2012-31714 and the Catalan Grant 2014SGR504 .
T. M-Seara has been partially supported by the Russian Scientific Foundation grant 14-41-00044 and Marie
Curie Action FP7-PEOPLE-2012-IRSES, BREUDS.

\bibliography{qneq0}

\bibliographystyle{alpha}
\end{document}